\newtheorem*{theoA}{Theorem A}
\newtheorem*{theoB}{Theorem B}
\newtheorem*{theoC}{Theorem C}
\newtheorem*{theoD}{Theorem D}
\newtheorem*{theoE}{Theorem E}
\newtheorem{theo}{Theorem}
\newtheorem{lem}{Lemma}
\newtheorem{cor}{Corollary}
\newtheorem{ex}{Example}
\newtheorem{rem}{Remark}
\newcommand{\ol}{\overline}
\newcommand{\be}{\begin{equation}}
\newcommand{\ee}{\end{equation}}
\newcommand{\beas}{\begin{eqnarray*}}
\newcommand{\eeas}{\end{eqnarray*}}
\newcommand{\bea}{\begin{eqnarray}}
\newcommand{\eea}{\end{eqnarray}}
\renewcommand{\epsilon}{\varepsilon}
\numberwithin{equation}{section}
\numberwithin{lem}{section}
\numberwithin{theo}{section}
\numberwithin{cor}{section}
\numberwithin{ex}{section}
\numberwithin{defi}{section}
\numberwithin{rem}{section}
\numberwithin{note}{section}
\newcommand{\lra}{\longrightarrow}
\begin{document}
\title[ \large O\MakeLowercase {\large n the transcendental solutions of} F{\MakeLowercase  {\large ermat type delay-differential...}}]{O\MakeLowercase  {\large n the transcendental solutions of} F{\MakeLowercase  {\large ermat type delay-differential and $c$-shift equations with some analogous results}}}
\author[A. Banerjee and T. Biswas]{Abhijit Banerjee and Tania Biswas}
\date{}
\address{ Department of Mathematics, University of Kalyani, West Bengal 741235, India.}
\email{abanerjee\_kal@yahoo.co.in, taniabiswas2394@gmail.com}
\renewcommand{\thefootnote}{}
\footnote{2010 {\emph{Mathematics Subject Classification}}: 39B32, 34M05, 30D35.}
\footnote{\emph{Key words and phrases}: Fermat type equation, delay-differential equations, shift equation, entire and meromorphic solutions, finite order, Nevanlinna theory.}
\renewcommand{\thefootnote}{\arabic{footnote}}
\setcounter{footnote}{0}
\begin{abstract} 
	In this paper, we mainly investigate on the finite order transcendental entire solutions of two Fermat types delay-differential and one Fermat type $c$-shift equations, as these types were not considered earlier. 
	Our results improve those of \cite{Liu-Dong_EJDE} in some sense. In addition, we also extend some recent results obtained in \cite{Qi-Liu-Yang-Irn}. A handful number of examples have been provided by us to justify our certain assertion as and when required.
\end{abstract}
\thanks{Typeset by \AmS -\LaTeX}
\maketitle
\section{Introduction and some basic definitions} 
 At the outset, we assume that the readers are familiar with the basic terms and notations of Nevanlinna's value distribution theory of meromorphic functions in the complex plane $\mathbb{C}$. So for such a meromorphic function $f$, terms like $T(r,f)$, $N(r, f)$, $m(r, f)$ etc., we refer to \cite{Hayman_Oxford,Laine_Gruyter}. The notation $S(r,f)$, is defined to be any quantity satisfying $S(r,f)=o(T(r,f))$ as $r\rightarrow \infty$, possibly outside a set $E$ of $r$ of finite logarithmic measure. The order of $f$ is defined by \beas \rho(f)=\limsup\limits_{r\lra \infty}\frac{\log T(r,f)}{\log r}.\eeas Moreover, the shift and difference operator of a function are represented by $f(z+c)$ and $\Delta(f) = f(z + c)-f(z),\;c \in \mathbb{C}\backslash \{0\}$, respectively. \par
In this paper, by linear $c$-shift operator in $f(z)$, we mean \bea\label{e1.1} L_{c}(z,f)=\displaystyle\sum_{j=0}^{\tau}a_jf(z+jc),\eea where $\tau\geq 1$, $a_{\tau}\neq 0$, $a_j$’s are any constants. On the other hand, for delay-differential operator in $f$, we mean, a finite sum of products of $f$, shifts of $f$, derivatives of $f$ and derivative of their shifts $f(z + jc), (c \in \mathbb{C})$, with constants coefficients.\par
We organize our paper as follows: In Section 2, we investigate on the entire
solutions of Fermat type delay-differential and $c$-shift equations and extend some previous results. The non-existence conditions of meromorphic solutions of certain non-linear $c$-shift equations will be considered in Section 3, which extend \cite{Qi-Liu-Yang-Irn}.
\section{Fermat type Delay-Differential and $c$-Shift Equations}
Initially, Fermat type equations were investigated by Gross \cite{Gross-I, Gross-II},
Montel \cite{Montel}. Yang \cite{Yang_Amer} investigated the Fermat type 
equation and obtained the
following result: 
\begin{theoA}\cite{Yang_Amer}
	Let $m$, $n$ be positive integers satisfying $\frac{1}{m}+\frac{1}{n}<1$. Then there are no non-constant entire solutions $f (z )$ and $g(z )$ that satisfy \bea\label{e2.1} a(z )f ^n(z )	+ b(z )g^m(z )	= 1,\eea where $a(z )$, $b(z )$ are small functions of $f (z )$.
\end{theoA}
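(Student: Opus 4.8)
The plan is to recast this two-variable problem as a one-variable value-distribution problem for the single auxiliary function $F=af^{n}$, and then to run the Second Main Theorem against the three fixed targets $0,1,\infty$. The key observation is that equation \eqref{e2.1} reads $F+bg^{m}=1$, so that $F-1=-bg^{m}$; thus the zeros of $f$ are, up to the zeros of the small function $a$, precisely the zeros of $F$, while the zeros of $g$ are, up to the zeros of the small function $b$, precisely the zeros of $F-1$. First I would record the growth comparison. Since $a$ is a small function of $f$, one has $T(r,F)=T(r,af^{n})=nT(r,f)+S(r,f)$, so that $F$ is non-constant whenever $f$ is, and $S(r,F)=S(r,f)$; in particular $a$ and $b$ remain small functions of $F$.

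The heart of the argument is the multiplicity bookkeeping. Because $f$ is entire and $a$ is small, the only poles of $F$ arise from $a$, so $\overline{N}(r,F)=S(r,f)$. At every zero of $f$ at which $a$ is finite and non-zero, $F=af^{n}$ vanishes to order at least $n$; hence, apart from an $S(r,f)$ contribution coming from the zeros of $a$, every zero of $F$ has multiplicity $\ge n$, which gives $\overline{N}(r,1/F)\le\tfrac1n N(r,1/F)+S(r,f)\le\tfrac1n T(r,F)+S(r,f)$. The same reasoning applied to $F-1=-bg^{m}$, using that the zeros of $g$ produce zeros of $F-1$ of order at least $m$, yields $\overline{N}(r,1/(F-1))\le\tfrac1m T(r,F)+S(r,f)$. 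It is worth stressing that only the smallness of $a,b$ with respect to $f$ is ever used: the function $g$ enters solely through the counting function of $F-1$, so no hypothesis on $g$ is required.

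Now I would apply the Second Main Theorem to the non-constant function $F$ with the three values $0,1,\infty$,
\beas
T(r,F)&\le&\overline{N}(r,F)+\overline{N}\Big(r,\tfrac1F\Big)+\overline{N}\Big(r,\tfrac{1}{F-1}\Big)+S(r,F).
\eeas
Substituting the three estimates above and using $S(r,F)=S(r,f)$ turns this into $T(r,F)\le\big(\tfrac1n+\tfrac1m\big)T(r,F)+S(r,f)$, that is,
\beas
\Big(1-\tfrac1n-\tfrac1m\Big)T(r,F)&\le& S(r,f).
\eeas
Since $\tfrac1n+\tfrac1m<1$ by hypothesis, the coefficient on the left is a positive constant, forcing $T(r,F)=S(r,F)$, which is impossible for a non-constant $F$. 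This contradicts the assumption that $f$ is non-constant, so no pair of non-constant entire solutions $f,g$ can exist.

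I expect the only delicate point to be the uniform control of the exceptional $S(r,f)$-terms. One must check that every place where $a$ or $b$ vanishes or blows up — and every coincidence of such a place with a zero of $f$ or of $g$ — contributes only to $\overline{N}(r,1/a)+\overline{N}(r,1/b)+N(r,a)+N(r,b)=S(r,f)$, so that the ``at least $n$'' and ``at least $m$'' multiplicity statements hold off a set counted by $S(r,f)$. Granting this routine verification, the inequality $\tfrac1m+\tfrac1n<1$ is exactly what is needed to close the argument, which also makes transparent why this threshold is sharp.
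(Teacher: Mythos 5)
Theorem A is quoted in the paper from \cite{Yang_Amer} without proof, so there is no in-paper argument to compare against; your proof is correct and is essentially the standard (indeed Yang's original) argument: set $F=af^{n}$, note $F-1=-bg^{m}$, bound $\overline{N}(r,F)$, $\overline{N}(r,1/F)$ and $\overline{N}(r,1/(F-1))$ by $S(r,f)$, $\tfrac1n T(r,F)+S(r,f)$ and $\tfrac1m T(r,F)+S(r,f)$ respectively, and invoke the Second Main Theorem for the values $0,1,\infty$. The only point worth making explicit is that you tacitly assume $a\not\equiv 0$ and $b\not\equiv 0$ (needed both for $T(r,F)=nT(r,f)+S(r,f)$ and for the theorem itself, which fails for $a\equiv 0$); with that stated, the bookkeeping of the exceptional $S(r,f)$-terms you flag at the end is indeed routine and the proof is complete.
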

From {\it{Theorem A}} it is clear that either $m\geq 2$, $n>2$ or $m>2$, $n\geq 2$. So, it is natural that the case $m=n=2$ can be treated when $f(z)$ and $g(z)$ have some special relationship in (\ref{e2.1}). This was the starting point of a new era about the solution of Fermat type equations. As a result, successively several papers were published (see \cite{Chen-Gao_Korean,BQLi_Archiv,Liu_JMAA,Liu-Cao_EJDE,Liu-Cao-Cao,Liu-Yang-Al.I.Cuza,Tang-Liao,Wang-Xu-Tu_AIMS,Yang-Li,Zhang-Liao}).\par
In 2007, Tang-Liao \cite{Tang-Liao} investigated on the  transcendental meromorphic solutions of the following non-linear differential equations
\bea\label{e2.2}f(z)^2 + P(z)^2(f^{(k)}(z))^2 = Q(z),\eea  where $P(z)$, $Q(z)$ are non-zero polynomials.
 \par In 2013, Liu-Yang \cite{Liu-Yang-CMFT} considered the existence of solutions of the analogous difference equations of (\ref{e2.2}) namely \bea\label{e2.3} f (z)^2 + P(z)^2(f(z+c)-f(z))^2 = Q(z), \eea\par
In this paper, we wish to investigate on the existence of solutions of certain Fermat type delay-differential equation as follows: \bea\label{e2.4} f^2(z)+R^2(z)(f^{(k)}(z+c)-f^{(k)}(z))^2=Q(z),\eea where $R(z)$, $Q(z)$ are and non-zero polynomials.\par  Liu-Yang \cite{Liu-Yang-CMFT} proved that (\ref{e2.3}) has no finite order transcendental entire solution, that's why in (\ref{e2.4}), it will be natural to investigate the case for $k\geq 1$.
\begin{theo}\label{t1.1} If the non-linear delay-differential equation (\ref{e2.4}) has a transcendental entire solution of finite order, then $f(z)$ takes the form \beas f(z)=\frac{Q_1(z)e^{az+b}+Q_2(z)e^{-(az+b)}}{2},\eeas $a,b\in\mathbb{C}$  such that $Q_1(z)Q_2(z)=Q(z)$. Moreover, one of the following conclusions hold:
\begin{itemize}
	\item [(i)] If $e^{ac}\neq 1${\em;} then $k$ must be odd, $Q(z)$ and $R(z)$ reduce to constants satisfying the equation $2iRa^k+1=0$.
	\item [(ii)] If $e^{ac}= 1${\em;} then $\deg {R(z)}=1$, none of $Q_1(z)$, $Q_2(z)$ be constants. Also, \beas R(z)=\frac{Q_1(z)}{P(Q_1)}=\frac{Q_2(z)}{(-1)^{k-l+1}P(Q_2)},\eeas such that $P(x)=i\sum\limits_{l=0}^{k}{k\choose l}a^{k-l}[x^{(l)}(z+c)-x^{(l)}(z)].$
\end{itemize}
	 
\end{theo}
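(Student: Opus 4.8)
\emph{Plan.} First I would factor the left-hand side of (\ref{e2.4}). Setting $g(z)=f^{(k)}(z+c)-f^{(k)}(z)$, which is entire of finite order since $f$ is, the equation becomes $(f+iRg)(f-iRg)=Q$. Because the product of these two entire factors equals the nonzero polynomial $Q$, each factor has only finitely many zeros (all lying among the zeros of $Q$) and is of finite order; hence by the Hadamard factorisation theorem $f+iRg=Q_1e^{\alpha}$ and $f-iRg=Q_2e^{\beta}$ with $Q_1,Q_2$ nonzero polynomials and $\alpha,\beta$ polynomials. Multiplying, $Q_1Q_2e^{\alpha+\beta}=Q$ forces $e^{\alpha+\beta}$ to be a nonvanishing entire function equal to a rational one, so $\alpha+\beta$ is constant and, after absorbing constants, $Q_1Q_2=Q$. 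Adding and subtracting the two representations yields $f=\tfrac12(Q_1e^{\alpha}+Q_2e^{-\alpha})$ and $2iRg=Q_1e^{\alpha}-Q_2e^{-\alpha}$.

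The central structural step is to show that $\alpha$ is \emph{linear}. Since $f$ is transcendental, $\deg\alpha\ge1$. I would compute $g$ directly from the formula for $f$: writing $(Q_1e^{\alpha})^{(k)}=U_ke^{\alpha}$ and $(Q_2e^{-\alpha})^{(k)}=V_ke^{-\alpha}$, substituting $f^{(k)}(z+c)-f^{(k)}(z)$ into $2iRg=Q_1e^{\alpha}-Q_2e^{-\alpha}$ produces an identity of the form $\sum(\text{polynomial})\cdot e^{(\cdot)}=0$ involving the four exponentials $e^{\pm\alpha(z)}$ and $e^{\pm\alpha(z+c)}$. If $\deg\alpha\ge2$, then $\alpha(z+c)-\alpha(z)$ is a nonconstant polynomial, so the four exponents are pairwise distinct and Borel's theorem forces every coefficient to vanish; in particular the coefficient $iRU_k(z+c)$ of $e^{\alpha(z+c)}$ vanishes, giving $(Q_1e^{\alpha})^{(k)}\equiv0$, which is impossible for a transcendental function. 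Hence $\deg\alpha=1$, say $\alpha=az+b$ with $a\neq0$, which gives the asserted shape of $f$ with $Q_1Q_2=Q$.

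With $\alpha=az+b$ one has $e^{\alpha(z+c)}=e^{ac}e^{\alpha(z)}$, so the four exponentials collapse to $e^{\pm(az+b)}$; matching their (linearly independent) polynomial coefficients turns $2iRg=Q_1e^{\alpha}-Q_2e^{-\alpha}$ into the two polynomial identities
\[ iR\sum_{l=0}^{k}\binom{k}{l}a^{k-l}\big(e^{ac}Q_1^{(l)}(z+c)-Q_1^{(l)}(z)\big)=Q_1, \]
together with the companion obtained by $a\mapsto-a$, $e^{ac}\mapsto e^{-ac}$, $Q_1\mapsto Q_2$ and right-hand side $-Q_2$. In case (ii), $e^{ac}=1$, each bracket becomes a pure difference $(\cdots)(z+c)-(\cdots)(z)$, which lowers the degree by one; comparing degrees in the first identity gives $\deg R=1$, and a constant $Q_1$ (or $Q_2$) would make the left side vanish while the right side does not, so neither is constant. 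Rewriting the two identities through the operator $P$ then yields exactly $R=Q_1/P(Q_1)=Q_2/\big((-1)^{k-l+1}P(Q_2)\big)$.

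In case (i), $e^{ac}\neq1$, each bracket keeps the full degree of $Q_1$ (the leading terms no longer cancel), so matching degrees in the first identity forces $\deg R=0$, i.e. $R$ is a constant. Comparing leading coefficients of both identities gives $iRa^k(e^{ac}-1)=1$ and $iR(-a)^k(e^{-ac}-1)=-1$; dividing these yields $e^{ac}=(-1)^k$, so $k$ must be odd, $e^{ac}=-1$, and then $2iRa^k+1=0$. The delicate point—and the step I expect to be the main obstacle—is to show that $Q$ itself reduces to a constant, since the degree count alone still permits nonconstant $Q_1,Q_2$. I would settle it by comparing the next-to-leading coefficients: assuming $\deg Q_1\ge1$, this comparison forces $ac=k(e^{-ac}-1)$, which under $e^{ac}=-1$ becomes $ac=-2k$, a nonzero real number; but $e^{ac}=-1$ requires $ac\in i\pi(2\mathbb{Z}+1)$ to be purely imaginary, a contradiction. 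Hence $Q_1$, and likewise $Q_2$, is constant, so both $Q=Q_1Q_2$ and $R$ are constants, completing case (i).
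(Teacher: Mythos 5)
Your proposal is correct and follows the same overall route as the paper's proof: the same factorisation of (\ref{e2.4}), Hadamard factorisation to obtain $f=\tfrac{1}{2}(Q_1e^{P}+Q_2e^{-P})$ with $Q_1Q_2=Q$, Borel's theorem (the paper's Lemma \ref{Borel}) to split the exponential identity and to force $\deg P=1$, and then degree comparisons in the two resulting polynomial identities to separate the cases $e^{ac}\neq 1$ and $e^{ac}=1$. The one place where you genuinely depart from the paper is the step you yourself flag as delicate in Case (i): proving that $Q$ is constant. The paper handles this by a case analysis on $\deg Q$ (degree $1$, degree $2$, and degree $n\geq 3$ treated separately), each time extracting the coefficient of $z^{\deg Q_1-1}$ and reaching the contradiction $2k=-(2m+1)\pi i$. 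Your single computation for arbitrary $\deg Q_1=j\geq 1$ --- comparing the coefficients of $z^{j-1}$ to get $ac\,e^{ac}+k(e^{ac}-1)=0$, hence $ac=-2k$ once $e^{ac}=-1$, contradicting $ac\in i\pi(2\mathbb{Z}+1)$ --- delivers exactly the paper's contradiction in one stroke and is cleaner and more visibly exhaustive than the paper's three subcases. Everything else (the collapse of the four exponentials when $\deg P\geq 2$, the degree count giving $\deg R=0$ versus $\deg R=1$, the non-constancy of $Q_1,Q_2$ and the identities defining $R(z)$ in Case (ii)) matches the published argument, and your observation that $\alpha+\beta$ must be constant because $e^{\alpha+\beta}$ is a zero-free rational function is a small point the paper glosses over.
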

The following examples clarify that both the cases of {\it{Theorem \ref{t1.1}}} actually hold.
\begin{ex}
	The function $f(z)=\displaystyle\frac{2e^{3z+2}+3e^{-(3z+2)}}{2}$ satisfies the Fermat equation $f^2(z)-\frac{1}{36}(f'(z+c)-f'(z))^2=6$, where $c=\pi i$, $R=-\frac{1}{6i}$.
\end{ex}
\begin{ex}
	The function $f(z)=\displaystyle\frac{\alpha z e^{az+b}+\beta z e^{-(az+b)}}{2}$ satisfies the Fermat equation $f^2(z)-\frac{z^2}{a^2c^2}(f'(z+c)-f'(z))^2=\alpha\beta z^2$ such that $e^{ac}=1$, $Q_1(z)=\alpha z$, $Q_2(z)=\beta z$, where $\alpha,\beta\in\mathbb{C}\backslash\{0\}$. Here, $R=\displaystyle\frac{\alpha z}{ia(\alpha.(z+c)-\alpha. z)}=\frac{\beta z}{ia(\beta.(z+c)-\beta.z)} $, i.e., $R=\displaystyle\frac{z}{iac}$.
\end{ex}
In 2015, Liu-Dong \cite{Liu-Dong_EJDE} investigated on \bea\label{e2.5} D^2f^2(z) + (Af(z + c) + Bf(z))^2 = 1\eea and proved that if there exist finite order transcendental entire solutions of (\ref{e2.5}), then $A^2 = B^2 + D^2$. In that paper, they also discussed about \bea\label{e2.6} f^2(z) + (Af^{(m)}(z) + Bf^{(n)}(z))^2 = 1\eea that (\ref{e2.6}) admits transcendental entire solution when $m+n$ is an even and $m$, $n$ are odds.\par
In this paper, we wish to investigate on the following Fermat type equation as this type of equation was not dealt earlier: \bea\label{e2.7} f^2(z)+R^2(z)(Af^{(m)}(z+c)+Bf^{(n)}(z))^2=1,\eea where $m,n\in\mathbb{N}$, $R(z)$ be a non-zero polynomial, $A$ and $B$ are non-zero constants and prove the following theorem.
\begin{theo}\label{t1.2} If the non-linear delay-differential equation (\ref{e2.7}) has a transcendental entire solution of finite order, then $R(z)$ reduces to constant, namely, $R$ and $f(z)$ takes the form \beas f(z)=\frac{e^{az+b}+e^{-(az+b)}}{2}\eeas such that when 
	\begin{itemize}
		\item [(I)] $m$, $n$ are even, then $a^{m-n}\neq\pm\frac{B}{A}$, $R^2=\displaystyle\frac{1}{a^{2m}A^2-a^{2n}B^2}$,\\ $e^{ac}=\displaystyle\frac{-a^nB\pm\sqrt{(a^nB)^2-(a^mA)^2}}{a^mA}$. Also, $e^{ac}\not\in\left\{\pm1,-\left(\displaystyle\frac{a^mA}{a^nB}\right)^{\pm 1}\right\}$;
		\item [(II)] $m$, $n$ are odd, then $e^{ac}=\pm1$, $a^{m-n}\neq\mp\frac{B}{A}$ and $R=\displaystyle\frac{-i}{a^nB\pm a^mA}$;
		\item [(III)] $m$ is even, $n$ is odd, then $e^{ac}=\pm i$, $a^{m-n}\neq\pm i\frac{B}{A}$ and $R=\displaystyle\frac{-i}{a^nB\pm ia^mA}$;
		\item [(IV)] $n$ is even, $m$ is odd, then $a^{m-n}\neq\pm i\frac{B}{A}$, $R^2=\displaystyle-\frac{1}{a^{2n}B^2+a^{2m}A^2}$,\\ $e^{ac}=\displaystyle\frac{-a^nB\pm\sqrt{(a^nB)^2+(a^mA)^2}}{a^mA}$. Also, $e^{ac}\not\in\left\{\pm 1,\displaystyle\frac{a^mA}{a^nB},\displaystyle-\frac{a^nB}{a^mA}\right\}$.
	\end{itemize}
\end{theo}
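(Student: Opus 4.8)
The plan is to exploit the Fermat-type structure $u^2+v^2=1$ with $u=f(z)$ and $v=R(z)\bigl(Af^{(m)}(z+c)+Bf^{(n)}(z)\bigr)$. Factoring (\ref{e2.7}) as $(u+iv)(u-iv)=1$, both factors are entire of finite order, since taking derivatives and shifts and multiplying by the polynomial $R$ all preserve this class, while their product equals $1$. Hence each factor is zero-free, and by Hadamard factorization there is a polynomial $p$ with $u+iv=e^{p(z)}$ and $u-iv=e^{-p(z)}$. This yields
\[ f(z)=\frac{e^{p(z)}+e^{-p(z)}}{2},\qquad R(z)\bigl(Af^{(m)}(z+c)+Bf^{(n)}(z)\bigr)=\frac{e^{p(z)}-e^{-p(z)}}{2i}, \]
and since $f$ is transcendental, $p$ must be non-constant.

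The decisive step is to show $\deg p=1$, and this is where I expect the main difficulty. Differentiating gives $f^{(k)}(z)=\tfrac12\bigl(A_k(z)e^{p(z)}+B_k(z)e^{-p(z)}\bigr)$, where $A_k,B_k$ are polynomials with $A_k=(p')^k+\cdots$ and $B_k=(-1)^k(p')^k+\cdots$. Substituting into the companion equation and collecting terms, I obtain a relation of the shape
\[ \alpha(z)e^{p(z+c)}+\beta(z)e^{-p(z+c)}+\gamma(z)e^{p(z)}+\delta(z)e^{-p(z)}=0, \]
with polynomial coefficients, where in particular $\alpha(z)=\tfrac12 R(z)A\,A_m(z+c)$. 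If $\deg p=d\ge 2$, then $p(z+c)-p(z)$ has degree $d-1\ge 1$, and one checks that the four exponents $\pm p(z),\pm p(z+c)$ have pairwise non-constant differences; a standard Borel-type lemma on linear combinations of exponentials of polynomials with polynomial coefficients then forces every coefficient to vanish. But $\alpha\equiv 0$ is impossible, since $R\not\equiv 0$, $A\neq 0$ and $A_m(z+c)=(p'(z+c))^m+\cdots\not\equiv 0$ for $m\ge 1$ and $d\ge 2$. Hence $d=1$, so $p(z)=az+b$ with $a\neq 0$, which gives the asserted form of $f$.

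With $p(z)=az+b$, writing $E_{\pm}=e^{\pm(az+b)}$ and using $f^{(k)}(z)=\tfrac{a^k}{2}\bigl(E_++(-1)^kE_-\bigr)$ together with $f^{(m)}(z+c)=\tfrac{a^m}{2}\bigl(e^{ac}E_++(-1)^me^{-ac}E_-\bigr)$, the companion equation becomes a polynomial combination of $E_+$ and $E_-$. Because $E_+$ and $E_-$ are linearly independent over polynomials, I equate coefficients to get the two scalar identities
\[ R(z)\bigl(Aa^me^{ac}+Ba^n\bigr)=-i,\qquad R(z)\bigl((-1)^mAa^me^{-ac}+(-1)^nBa^n\bigr)=i. \]
The first identity has a nonzero constant on the right, so $R(z)$ must reduce to a constant $R$, as claimed.

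It then remains to solve this $2\times 2$ system for $R$ and $e^{ac}$ in each of the four parity cases for $(m,n)$. Eliminating $R$ produces a quadratic for $u=e^{ac}$ in cases (I) and (IV), and directly $e^{ac}=\pm 1$ or $e^{ac}=\pm i$ in cases (II) and (III); multiplying the two expressions for $R$ and simplifying via that quadratic yields the stated value of $R^2$ (or of $R$). Finally, requiring that $R$ be finite and nonzero and that the candidate values of $e^{ac}$ be genuinely admissible gives precisely the nondegeneracy conditions $a^{m-n}\neq\pm B/A$ (respectively $\mp B/A$ and $\pm iB/A$) and the excluded values of $e^{ac}$. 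This last bookkeeping is routine but has to be carried out separately in each of the four cases.
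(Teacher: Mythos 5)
Your proposal follows essentially the same route as the paper: factor the Fermat identity, apply Hadamard factorization to get $f=\frac{e^{p}+e^{-p}}{2}$, force $\deg p=1$ via a Borel-type lemma on exponentials of polynomials (the paper does this in two sequential applications of the same lemma rather than one four-term application, but the mechanism is identical), and then reduce to the same two scalar equations $iR(a^mAe^{ac}+a^nB)=1$ and $iR((-a)^mAe^{-ac}+(-a)^nB)=-1$, from which the four parity cases follow by elimination. The argument is correct and matches the paper's proof in all essential respects.
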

\begin{rem}
Adopting the same procedure of {\em{Case (I)}} in {\it Theorem 2.2}, for $m=n=0$, if we choose $f(z)=Dg(z)$ and $R=1/D$ in (\ref{e2.7}), then we have $R^2=\displaystyle\frac{1}{A^2-B^2}\implies A^2=B^2+D^2$, i.e., we get \cite[Theorem 1.13]{Liu-Dong_EJDE}. So our theorem is a significant extension of the same.
\end{rem}
Following example shows that in {\it{Theorem \ref{t1.1}}}, each of the cases (I)-(IV) actually occurs.
\begin{ex}
	{\em{\textbf{(I)}}} Let $m$, $n$ both be even. Consider the function $f(z)=\frac{e^{2z+3}+e^{-(2z+3)}}{2}$. It is easy to see that $f$ satisfies the Fermat equation $f^2(z)-\frac{1}{2^7}(f''(z+c)+3f''(z))^2=1$, such that $e^{2c}=-3+2\sqrt{2}$.\\
	{\em{\textbf{(II)}}} Let $m$, $n$ be both odd. Choose  $f(z)=\frac{e^{3z+4}+e^{-(3z+4)}}{2}$, which satisfies the Fermat equation $f^2(z)-\frac{1}{{12}^2}(5f'(z+c)+f'''(z))^2=1$, such that $e^{3c}=-1$.\\
	{\em{\textbf{(III)}}} Let $m$ be even and $n$ be odd. The function $f(z)=\frac{e^{z+2}+e^{-(z+2)}}{2}$ satisfies the Fermat equation $f^2(z)-\frac{1}{8+6i}(f''(z+c)+3f'''(z))^2=1$, such that $e^{c}=i$.\\
	{\em{\textbf{(IV)}}} Let $m$ be odd, $n$ be even. Consider the function $f(z)=\frac{e^{2z+1}+e^{-(2z+1)}}{2}$ . The function $f$ satisfies the Fermat equation $f^2(z)-\frac{1}{100}(3f'(z+c)+2f''(z))^2=1$, such that $e^{2c}=\frac{1}{3}$.
\end{ex}
In \cite{Liu-Yang-CMFT}, Liu-Yang obtained the following result:
\begin{theoB}
	There is no finite order transcendental entire solution of (\ref{e2.3}), where P(z), Q(z) are non-zero polynomials.
\end{theoB}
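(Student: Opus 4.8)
The plan is to linearise the equation by factoring its left-hand side. Writing $\Delta f=f(z+c)-f(z)$, I would set $u=f+iP\,\Delta f$ and $v=f-iP\,\Delta f$, so that $uv=f^2+P^2(\Delta f)^2=Q$. Both $u$ and $v$ are entire of finite order, and since their product is the polynomial $Q$ they have only finitely many zeros. By the Hadamard factorisation theorem, $u=p_1e^{\alpha}$ and $v=p_2e^{\beta}$ with $p_1,p_2,\alpha,\beta$ polynomials; the requirement $uv=p_1p_2e^{\alpha+\beta}=Q$ forces $\alpha+\beta$ to be constant, so after absorbing that constant I may take $\beta=-\alpha$ and $p_1p_2=Q$. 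Hence $f=\tfrac12(p_1e^{\alpha}+p_2e^{-\alpha})$ and $iP\,\Delta f=\tfrac12(p_1e^{\alpha}-p_2e^{-\alpha})$.

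Next I would pin down $\alpha$. Substituting the formula for $f$ into the identity $iP\,\Delta f=\tfrac12(p_1e^{\alpha}-p_2e^{-\alpha})$ produces an exponential sum in $e^{\pm\alpha(z)}$ and $e^{\pm\alpha(z+c)}$ with polynomial coefficients. If $\deg\alpha\ge 2$, these four exponents are pairwise distinct polynomials whose differences are all non-constant, so Borel's theorem forces every coefficient (in particular $iP\,p_1(z+c)$) to vanish, which is impossible. Therefore $\deg\alpha\le1$, and since $f$ is transcendental $\alpha$ cannot be constant, giving $\alpha(z)=az+b$ with $a\ne0$. Then $e^{\alpha(z+c)}=e^{ac}e^{\alpha(z)}$, the four exponents collapse to $e^{\pm\alpha}$, and separating the coefficients of $e^{\alpha}$ and $e^{-\alpha}$ (legitimate because $a\ne0$ makes $e^{2\alpha}$ transcendental while the coefficients are rational) yields the two polynomial identities
\[
iP(z)\big[e^{ac}p_1(z+c)-p_1(z)\big]=p_1(z),\qquad iP(z)\big[e^{-ac}p_2(z+c)-p_2(z)\big]=-p_2(z).
\]

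Finally I would extract a contradiction by comparing degrees and leading coefficients, splitting on $\deg P$. If $P\equiv\mu$ is constant, the leading-coefficient comparison in the two identities gives $i\mu e^{ac}=1+i\mu$ and $i\mu e^{-ac}=i\mu-1$; multiplying these yields $(i\mu)^2=(i\mu)^2-1$, i.e.\ $0=-1$. If instead $P$ is non-constant, the leading-coefficient comparison forces $e^{ac}=1$, whereupon the identities reduce to $iP\,\Delta p_1=p_1$ and $iP\,\Delta p_2=-p_2$; a degree count then forces $\deg P=1$ together with $\deg p_1,\deg p_2\ge1$, and comparing leading coefficients (with $P(z)=\mu z+\nu$) gives $i\mu c\deg p_1=1$ and $i\mu c\deg p_2=-1$, whence $\deg p_1=-\deg p_2$, impossible for positive integers. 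In either case no finite order transcendental entire solution can exist. I expect the main obstacle to be the rigorous use of Borel's lemma that rules out $\deg\alpha\ge2$ and legitimises the coefficient separation; once the two polynomial identities are secured, the contradiction is routine degree-and-leading-coefficient bookkeeping.
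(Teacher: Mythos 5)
Your proof is correct. The paper does not actually supply a proof of Theorem B --- it is quoted from \cite{Liu-Yang-CMFT} --- but your argument is essentially the $k=0$ instance of the scheme the paper uses for Theorem \ref{t1.1}: factor the left-hand side, apply Hadamard factorization to write $f=\tfrac12(p_1e^{\alpha}+p_2e^{-\alpha})$, invoke Lemma \ref{Borel} to force $\deg\alpha=1$ and to separate the coefficients of $e^{\pm\alpha}$ into two polynomial identities, and then compare degrees and leading coefficients. Your concluding bookkeeping (the contradiction $0=-1$ when $P$ is constant, and $\deg p_1=-\deg p_2$ when $e^{ac}=1$ with $\deg P=1$) is sound and, if anything, spells out the case analysis more explicitly than the paper's corresponding Case 1/Case 2 treatment of its own theorem.
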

In this paper, we partially extend the above result in the following manner.
\begin{theo}\label{t1.3} The non-linear $c$-shift equation \bea\label{e2.8} f^2(z)+L_c^2(z,f)=1\eea has finite order transcendental entire solution of the form $$f(z)=\frac{e^{az+b}+e^{-(az+b)}}{2},$$ satisfying following two equations: \bea\label{e2.9}\left\{\begin{array}{clcr}  a_0+a_1e^{ac}+a_2e^{2ac}+\ldots+a_\tau e^{\tau ac}&=&-i, \\   a_0+a_1e^{-ac}+a_2e^{-2ac}+\ldots+a_\tau e^{-\tau ac}&=&i.\end{array}\right.\eea Here $e^{ac}\neq\pm 1$. Also if $\tau=1$, $a_0\neq \pm a_1$ is required.
\end{theo}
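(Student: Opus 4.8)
The plan is to exploit the Fermat structure by factoring (\ref{e2.8}) as a difference of two squares. Since
$$\bigl(f+iL_c\bigr)\bigl(f-iL_c\bigr)=f^2+L_c^2=1,$$
and $f$ is entire of finite order, every shift $f(z+jc)$ — and hence the finite linear combination $L_c(z,f)$ — is again entire of the same finite order; thus both $f\pm iL_c$ are entire of finite order whose product is the nonvanishing constant $1$. Consequently neither factor has a zero, and since a zero-free entire function of finite order is the exponential of a polynomial (Hadamard's factorization theorem), I would first write $f+iL_c=e^{p(z)}$ for some polynomial $p$, so that $f-iL_c=e^{-p(z)}$ is its reciprocal, and then solve to obtain
$$f=\frac{e^{p}+e^{-p}}{2}, \qquad L_c=\frac{e^{p}-e^{-p}}{2i}.$$

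Next I would reconcile the two descriptions of $L_c$. Substituting the expression for $f$ into $L_c(z,f)=\sum_{j=0}^{\tau}a_jf(z+jc)$ and equating with the formula above yields the single functional identity
$$-i\bigl(e^{p(z)}-e^{-p(z)}\bigr)=\sum_{j=0}^{\tau}a_j\bigl(e^{p(z+jc)}+e^{-p(z+jc)}\bigr).$$
The crux is to show $p$ is linear. It cannot be constant, for then $f$ would be constant, contradicting transcendence, so $\deg p\geq 1$. Suppose $\deg p=d\geq 2$; then for $j\neq l$ the polynomial $p(z+jc)-p(z+lc)$ has degree $d-1\geq1$ and $p(z+jc)+p(z+lc)$ has degree $d$, so the $2(\tau+1)$ exponents $\pm p(z+jc)$ occurring in the identity are pairwise distinct with nonconstant differences. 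Moving every term to one side and applying Borel's lemma on exponential sums (the coefficients being constants, hence of smaller growth) forces all coefficients to vanish; in particular the coefficient of $e^{p(z)}$ is $a_0+i$ and that of $e^{-p(z)}$ is $a_0-i$, which cannot both be zero. This contradiction gives $\deg p=1$, say $p(z)=az+b$ with $a\neq0$, establishing the asserted form of $f$.

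With $p(z)=az+b$ the shifts collapse, since $e^{\pm p(z+jc)}=e^{\pm p(z)}e^{\pm jac}$, and the identity becomes
$$-ie^{p}+ie^{-p}=\Bigl(\sum_{j=0}^{\tau}a_je^{jac}\Bigr)e^{p}+\Bigl(\sum_{j=0}^{\tau}a_je^{-jac}\Bigr)e^{-p}.$$
Because $a\neq0$, the exponents $p$ and $-p$ differ by a nonconstant polynomial, so $e^{p}$ and $e^{-p}$ are linearly independent (again by Borel), and comparing coefficients reads off exactly the two relations (\ref{e2.9}). Finally I would record the stated restrictions: if $e^{ac}=1$ or $e^{ac}=-1$ then $e^{jac}=e^{-jac}$ for every $j$, forcing the two left-hand sides of (\ref{e2.9}) to coincide while their right-hand sides are $-i$ and $i$ — impossible — so $e^{ac}\neq\pm1$; and for $\tau=1$, eliminating $e^{\pm ac}$ between the two equations of (\ref{e2.9}) through the relation $e^{ac}e^{-ac}=1$ produces the compatibility condition $a_1^2-a_0^2=1$, which fails whenever $a_0=\pm a_1$, yielding the last requirement. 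I expect the Borel-type separation of the exponential terms — used both to rule out $\deg p\geq2$ and to justify the coefficient comparison in the linear case — to be the main technical obstacle, as it rests on verifying that the relevant exponent polynomials have pairwise nonconstant differences.
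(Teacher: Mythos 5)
Your proposal is correct and follows essentially the same route as the paper: factor the Fermat equation, apply Hadamard factorization to get $f=(e^{p}+e^{-p})/2$, rule out $\deg p\geq 2$ via Borel's lemma on the resulting exponential identity, and read off (\ref{e2.9}) together with the conditions $e^{ac}\neq\pm 1$ and (for $\tau=1$) $a_0\neq\pm a_1$. The only cosmetic difference is that you apply Borel once to all $2(\tau+1)$ exponentials, whereas the paper first groups terms by $e^{\pm P}$ with exponential-sum coefficients of lower order and then applies Borel a second time within each group; your verification of the final two conditions is in fact more explicit than the paper's.
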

\begin{rem} In {\em{Theorem \ref{t1.3}}}, if we take $\tau=1$ and $a_1=-a_0=1$, from (\ref{e2.9}), it is clear that when $L_c(z,f)\equiv \Delta_cf(z)$, there exists no finite order transcendental entire solution, which includes special case of {\em{Theorem B}}. 
\end{rem}
\begin{ex}\label{ex1.1}
	Consider the function $f(z)=\sin(\frac{\pi z}{2c})$. Here $a=\frac{i\pi}{2c}$, $e^b=\frac{1}{i}$. Then $f(z)$ be a solution of the equation $f^2(z)+(L_c(z,f))^2=1$, provided that $\tau$ is an odd integer say $\tau=2m+1,\;m\geq 1$ and the coefficients of $L_c(z,f)$  satisfy the following simultaneous equations:
	\beas\left\{\begin{array}{clcr}  a_0-a_2+a_4-a_6+\ldots+(-1)^ma_{2m}&=&0, \\  a_1-a_3+a_5-a_7+\ldots+(-1)^ma_{2m+1}&=&-1; \end{array}\right.\eeas
	and when $\tau$ is an even integer say $\tau=2m,\;m\geq 1$, the coefficients of $L_c(z,f)$  satisfy the following simultaneous equations:
	\beas\left\{\begin{array}{clcr}  a_0-a_2+a_4-a_6+\ldots+(-1)^ma_{2m}&=&0, \\  a_1-a_3+a_5-a_7+\ldots+(-1)^{m-1}a_{2m-1}&=&-1. \end{array}\right.\eeas
\end{ex}
The following lemma plays an important part for the proof in this section:
\begin{lem}\label{Borel}\cite{Yang-Yi_Kluwer}
	Suppose $f_j (z)$ $( j =1 ,2,...,n+1)$ and $g_k (z)$ $( k =1 ,2,...,n)$ $(n \geq 1)$ are entire functions satisfying the following conditions: 
	\begin{itemize}
		\item[(i)] $ \sum_{j=1}^{n} f_j (z)e^{g_j(z)} \equiv f_{n+1} (z)$, \item[(ii)] The order of $f_j (z)$ is less than the order of $e^{g_k(z)}$ for $1 \leq j \leq n +1$, $1 \leq k \leq n$ and furthermore, the order of $f_j (z)$ is less than the order of $e^{g_h(z)-g_k(z)}$ for $n \geq 2$ and $1\leq j \leq n +1$, $1\leq h<k\leq n$.
	\end{itemize} Then $f_j (z)\equiv 0$, $(j =1 ,2,...,n+1)$. 
\end{lem}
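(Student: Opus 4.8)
The statement is the classical Borel identity theorem in inhomogeneous form, so the plan is to reduce it to the homogeneous version and prove the latter by induction on the number of exponential terms. First I would \emph{homogenize}: setting $g_{n+1}\equiv 0$ and reading $f_{n+1}$ as the coefficient of $e^{g_{n+1}}$, hypothesis (i) becomes $\sum_{j=1}^{n+1} f_j e^{g_j}\equiv 0$, a homogeneous exponential sum in $n+1$ terms. Under this relabelling the two clauses of (ii) merge into the single symmetric condition $\rho(f_i)<\rho(e^{g_h-g_k})$ for every pair $h\neq k$ in $\{1,\dots,n+1\}$: the first clause supplies the pairs involving the new index $n+1$ (where $g_h-g_{n+1}=g_h$), and the second clause supplies the pairs with both indices at most $n$. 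Thus it suffices to prove the homogeneous statement: if $\sum_{j=1}^{m} h_j e^{\gamma_j}\equiv 0$ with $\rho(h_i)<\rho(e^{\gamma_h-\gamma_k})$ for all $h\neq k$, then every $h_j\equiv 0$.

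The plan is to induct on $m$. The base case $m=1$ is immediate, since $e^{\gamma_1}$ is zero-free and forces $h_1\equiv 0$. For the inductive step I may assume every $h_j\not\equiv0$, for otherwise the relation has fewer than $m$ genuine terms and the induction hypothesis applies at once. Dividing by $e^{\gamma_m}$ rewrites the relation as $\sum_{j=1}^{m-1} h_j e^{\gamma_j-\gamma_m}=-h_m$, and differentiating yields the companion relation $\sum_{j=1}^{m-1}\bigl(h_j'+h_j(\gamma_j'-\gamma_m')\bigr)e^{\gamma_j-\gamma_m}=-h_m'$. The decisive move is to eliminate the inhomogeneous terms $-h_m,-h_m'$ by forming $h_m\cdot(\text{second})-h_m'\cdot(\text{first})$; the right-hand side cancels identically, and after multiplying back by $e^{\gamma_m}$ I obtain a \emph{homogeneous} relation in only $m-1$ exponentials, namely $\sum_{j=1}^{m-1}\bigl(h_m h_j'-h_m' h_j+h_m h_j(\gamma_j'-\gamma_m')\bigr)e^{\gamma_j}\equiv 0$.

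Applying the induction hypothesis to this $(m-1)$-term sum forces each bracket to vanish, i.e.\ $\frac{h_j'}{h_j}-\frac{h_m'}{h_m}=\gamma_m'-\gamma_j'$ for $j=1,\dots,m-1$. Integrating this logarithmic-derivative identity gives $h_j=c_j\,h_m\,e^{\gamma_m-\gamma_j}$ with constants $c_j$, and here the order hypothesis closes the argument: if $c_j\neq0$, then since $\rho(e^{\gamma_m-\gamma_j})>\rho(h_m)$ the exponential dominates, so $\rho(h_j)=\rho(e^{\gamma_m-\gamma_j})>\rho(h_j)$, which is absurd. Hence every $c_j=0$, so each $h_j\equiv0$, contradicting the assumption that all coefficients are nonzero. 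Therefore the all-nonzero case never occurs, the relation always carries a vanishing coefficient, and the induction closes with every $h_j\equiv0$; unwinding the homogenization returns $f_1\equiv\cdots\equiv f_{n+1}\equiv0$, as required.

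The hard part will be the \emph{order bookkeeping} needed to legitimately invoke the induction hypothesis on the reduced sum: one must check that the manufactured coefficients $h_m h_j'-h_m'h_j+h_m h_j(\gamma_j'-\gamma_m')$ still have order strictly below $\rho(e^{\gamma_h-\gamma_k})$ for every surviving pair. Since differentiation preserves order, $\rho(h_j')=\rho(h_j)$, and for the exponents that actually occur here—polynomial, and in particular the linear $\gamma_j$ arising in all our applications—one has $\rho(\gamma_j'-\gamma_m')=0$ while each $\rho(e^{\gamma_h-\gamma_k})$ is a positive integer, so the comparison is automatic. The only genuinely delicate point in full generality is to confirm $\rho(\gamma_j-\gamma_m)<\rho(e^{\gamma_h-\gamma_k})$ for transcendental exponents, which still follows because the hypothesis forces each $e^{\gamma_h-\gamma_k}$ to have order strictly larger than every finite-order coefficient, and hence larger than $\gamma_j-\gamma_m$.
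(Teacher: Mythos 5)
First, a structural note: the paper offers no proof of this lemma at all --- it is quoted from \cite{Yang-Yi_Kluwer} as a known tool --- so there is no internal argument to compare yours against. On its own terms, your homogenization, the base case, the elimination $h_m\cdot(\text{derived relation})-h_m'\cdot(\text{original relation})$, and the closing contradiction from $h_j=c_j\,h_m\,e^{\gamma_m-\gamma_j}$ (using $\rho(h_m)<\rho(e^{\gamma_m-\gamma_j})$ to force $\rho(h_j)=\rho(e^{\gamma_m-\gamma_j})>\rho(h_j)$) are all correct, and the argument is complete in every situation where this paper actually invokes the lemma, namely at most two exponential terms with polynomial exponents.

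As a proof of the lemma in its stated generality, however, there is a genuine gap, located exactly where you flag the ``delicate point.'' To invoke the induction hypothesis on the reduced $(m-1)$-term sum you need $\rho\bigl(h_mh_j'-h_m'h_j+h_mh_j(\gamma_j'-\gamma_m')\bigr)<\rho(e^{\gamma_h-\gamma_k})$ for every \emph{surviving} pair $h\neq k$ in $\{1,\dots,m-1\}$, and the problematic term is $\gamma_j'-\gamma_m'$. Your justification --- that the hypothesis forces each $e^{\gamma_h-\gamma_k}$ to have order larger than every finite-order coefficient, ``and hence larger than $\gamma_j-\gamma_m$'' --- is a non sequitur: the hypothesis bounds the orders of the coefficients $h_i$, not of the exponent differences, and nothing ties $\rho(\gamma_j-\gamma_m)$ to $\max_i\rho(h_i)$. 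Concretely, take $m=3$, $\gamma_1=z$, $\gamma_2=2z$, $\gamma_3=e^{z}$, with coefficients of order less than $1$; the hypotheses of the homogenized statement are consistent, yet eliminating the index $3$ produces coefficients containing $\gamma_1'-\gamma_3'=1-e^{z}$, of order $1=\rho(e^{\gamma_1-\gamma_2})$, so the induction hypothesis cannot be applied to the reduced two-term sum. The step can be repaired --- by choosing which index to eliminate according to the growth classes of the exponents, or, as in the standard treatment in \cite{Yang-Yi_Kluwer}, by abandoning iterated elimination in favour of a Wronskian argument controlled by the lemma on logarithmic derivatives and phrased in terms of $T(r,\cdot)$ --- but as written your induction does not close for general entire exponents once $m\geq 3$.
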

\begin{proof} [\bf\underline{Proof of Theorem \ref{t1.1}}]
	Assume that $f(z)$ is a finite order transcendental entire solution of (\ref{e2.4}), then \bea\label{e2.10}[f(z)+iR(z)(f^{(k)}(z+c)-f^{(k)}(z))][f(z)-iR(z)(f^{(k)}(z+c)-f^{(k)}(z))]=Q(z).\eea Thus both of $f(z)+iR(z)(f^{(k)}(z+c)-f^{(k)}(z))$ and $f(z)-iR(z)(f^{(k)}(z+c)-f^{(k)}(z))$ have finitely many zeros. Combining (\ref{e2.10}), with Hadamard factorization theorem, we assume that $$f(z)+iR(z)(f^{(k)}(z+c)-f^{(k)}(z))=Q_1(z)e^{P(z)}$$ and $$f(z)-iR(z)(f^{(k)}(z+c)-f^{(k)}(z))=Q_2(z)e^{-P(z)},$$ where $P(z)$ is a non-constant polynomial, otherwise $f(z)$ will be a polynomial and $Q_1(z)Q_2(z)=Q(z)$, where $Q_1(z)$, $Q_2(z)$ are non-zero polynomials. We denote the degrees of $Q_1(z)$, $Q_2(z)$ and $P(z)$ by $k_1$, $k_2$ and $k_3$, respectively. Thus, we have, \bea\label{e2.11}f(z)=\frac{Q_1(z)e^{P(z)}+Q_2(z)e^{-P(z)}}{2}\eea and \bea\label{e2.12}f^{(k)}(z+c)-f^{(k)}(z)=\frac{Q_1(z)e^{P(z)}-Q_2(z)e^{-P(z)}}{2iR(z)}.\eea From (\ref{e2.11}), we have 
	\bea\label{e2.13} f^{(k)}(z)=\frac{p_1(z)e^{P(z)}+p_2(z)e^{-P(z)}}{2},\eea where, 
	\bea\label{e2.14} p_1(z)&=&Q_1(z)\left[P'(z)^k+M_{1,k}\left(P',P'',\ldots,P^{(k)}\right)\right]+Q_1'(z)M_{2,k-1}\left(P',P'',\ldots,P^{(k-1)}\right)\nonumber\\&&+\ldots+Q_1^{(k-1)}(z)M_{k,1}(P')+Q_1^{(k)}(z),\eea \bea\label{e2.15} p_2(z)&=&Q_2(z)\left[(-1)^kP'(z)^k+N_{1,k}\left(P',P'',\ldots,P^{(k)}\right)\right]+(-1)^{k-1}Q_2'(z)N_{2,k-1}\left(P',P'',\right.\nonumber\\&&\left.\ldots,P^{(k-1)}\right)+\ldots-Q_2^{(k-1)}(z)N_{k,1}(P')+Q_2^{(k)}(z),\eea
	where $M_{j,k-j+1}(N_{j,k-j+1})$ $(j=1,2)$ are differential polynomials of $P'$ with degree $k-1$. $M_{j,k-j+1}(N_{j,k-j+1})$ are differential polynomial of $P'$ with degree $k-j+1$ $(j=3,4,\ldots,k)$. It follows that $p_1(z)$, $p_2(z)$ are polynomials with
	degree $k_1 +k(k_3 -1)\geq k_1$ and  $k_2 +k(k_3 -1)\geq k_2$, respectively. Using (\ref{e2.12}) and (\ref{e2.13}) we have
	\bea\label{e2.16} &&\frac{Q_1(z)e^{P(z)}-Q_2(z)e^{-P(z)}}{2iR(z)}\nonumber\\&=&\frac{p_1(z+c)e^{P(z+c)}+p_2(z+c)e^{-P(z+c)}}{2}-\frac{p_1(z)e^{P(z)}+p_2(z)e^{-P(z)}}{2}\nonumber\\&=&\frac{(p_1(z+c)e^{\Delta_c{P(z)}}-p_1(z))e^{P(z)}+(p_2(z+c)e^{-\Delta_c{P(z)}}-p_2(z))e^{-P(z)}}{2}.\eea
	Then (\ref{e2.16}) can be written as 
	\bea\label{e2.17} &&\left[p_1(z+c)e^{\Delta_c{P(z)}}-p_1(z)-\frac{Q_1(z)}{iR(z)}\right]e^{P(z)}\nonumber\\&&+\left[p_2(z+c)e^{-\Delta_c{P(z)}}-p_2(z)+\frac{Q_2(z)}{iR(z)}\right]e^{-P(z)}=0.\eea
	Applying {\it Lemma \ref{Borel}} on (\ref{e2.17}), we have 
	\bea\label{e2.18} p_1(z+c)e^{\Delta_c{P(z)}}-p_1(z)-\frac{Q_1(z)}{iR(z)}=0\eea and \bea\label{e2.19} p_2(z+c)e^{-\Delta_c{P(z)}}-p_2(z)+\frac{Q_2(z)}{iR(z)}=0.\eea
	Now we show that $P(z)$ is a one-degree polynomial. If not, suppose that $\deg(P(z))\geq 2$. Then applying {\it Lemma \ref{Borel}} on (\ref{e2.18}) and (\ref{e2.19}), we have $p_1(z+c)=0$ and $p_2(z+c)=0$, a contradiction, which implies that $P(z)$ is a one-degree polynomial, say, 
	\bea\label{e2.20} P(z)=az+b,\;a,b\in\mathbb{C}.\eea
	In view of (\ref{e2.20}), (\ref{e2.17}) yields 
	\bea\label{e2.21} \left[p_1(z+c)e^{ac}-p_1(z)-\frac{Q_1(z)}{iR(z)}\right]e^{P(z)}+\left[p_2(z+c)e^{-ac}-p_2(z)+\frac{Q_2(z)}{iR(z)}\right]e^{-P(z)}=0.\eea
	Using (\ref{e2.20}), the expressions of $p_1(z)$ and $p_2(z)$ given by (\ref{e2.14}) and (\ref{e2.15}) reduec to 
	\bea\label{e2.22} p_1(z)=\sum\limits_{l=0}^{k}{k\choose l}a^{k-l}Q_1^{(l)}(z)\;\;and\;\;p_2(z)=\sum\limits_{l=0}^{k}{k\choose l}(-a)^{k-l}Q_2^{(l)}(z),\eea respectively.
	Now we have to consider the following two cases:\\
	{\bf Case 1:} If $e^{ac}\neq 1$. Then considering the degrees of $p_1$, $Q_1$ and $p_2$, $Q_2$ of the equation (\ref{e2.21}), one can conclude that $R(z)$ is constant, say, $R$.
	Thus, using (\ref{e2.20}) and (\ref{e2.22}), (\ref{e2.18}) and (\ref{e2.19}) become 
	\bea\label{e2.23} iR\sum\limits_{l=0}^{k}{k\choose l}a^{k-l}\left[e^{ac}Q_1^{(l)}(z+c)-Q_1^{(l)}(z)\right]=Q_1(z)\eea and 
	\bea\label{e2.24} iR\sum\limits_{l=0}^{k}{k\choose l}(-a)^{k-l}\left[Q_2^{(l)}(z)-e^{-ac}Q_2^{(l)}(z+c)\right]=Q_2(z).\eea Considering the highest degree on both sides of (\ref{e2.23}) and (\ref{e2.24}), we have 
	\beas iRa^k(e^{ac}-1)=1\; and\; iR(-a)^k(1-e^{-ac})=1,\eeas which implies $e^{ac}=(-1)^k$. Since $e^{ac}\neq 1$, $k$ must be odd and \bea\label{e2.25}a=\frac{(2m+1)\pi i}{c},\eea $m$ is any integer. So, 
	\bea\label{e2.26}2iRa^k+1=0.\eea
	Eliminating $e^{\Delta_cP(z)}$ from (\ref{e2.18}) and (\ref{e2.19}), we have 
	\bea\label{e2.27} && \frac{iRp_1(z)+Q_1(z)}{iRp_1(z+c)}=\frac{iRp_2(z+c)}{iRp_2(z)-Q_2(z)}\nonumber\\&\implies& (p_1(z)p_2(z)-p_1(z+c)p_2(z+c))R^2+(p_1(z)Q_2(z)-p_2(z)Q_1(z))iR\nonumber\\&&+Q_1(z)Q_2(z)=0.\eea
	\par First suppose $Q(z)$ be constant, then $Q_1(z)$ and $Q_2(z)$ are constants, then from (\ref{e2.22}) and (\ref{e2.27}) we have, 
	$(2iRa^k+1)Q_1Q_2=0$, which is possible from (\ref{e2.26}).\par Suppose that $Q(z)$ be one-degree polynomial and let $Q_1(z)=\alpha_1z+\alpha_0$, $\alpha_1\neq 0$ and $Q_2(z)$ be constant. Using $e^{ac}=-1$, (\ref{e2.25}), (\ref{e2.26}), we have from (\ref{e2.23}) that \beas &&iR[a^{k}\{-(\alpha_1(z+c)+\alpha_0)-(\alpha_1z+\alpha_0)\}-2ka^{k-1}\alpha_1]=\alpha_1z+\alpha_0\\&\implies&iR[a^{k}\{2(\alpha_1z+\alpha_0)+\alpha_1c\}+2ka^{k-1}\alpha_1]+(\alpha_1z+\alpha_0)=0\\&\implies&(2iRa^k+1)(\alpha_1z+\alpha_0)+iRa^{k-1}\alpha_1(ac+2k)=0\\&\implies&2k=-(2m+1)\pi i,\eeas a contradiction. Similarly, considering $Q_1(z)$ and $Q_2(z)$ respectively as constant and one-degree polynomial, we can get a contradiction in a similar way. So, $Q(z)$ cannot be one-degree polynomial.\par Next suppose that $Q(z)$ is a polynomial of degree $2$. If $Q_1(z)$ and $Q_2(z)$ both are one-degree polynomials, then from previous argument, we get a contradiction. Now let $Q_1(z)=\beta_2z^2+\beta_1z+\beta_0$, $\beta_2\neq 0$ and $Q_2(z)$ be a constant. Using $e^{ac}=-1$, (\ref{e2.25}), (\ref{e2.26}), we have from (\ref{e2.23}) that \beas &&iR[a^{k}\{-(\beta_2(z+c)^2+\beta_1(z+c)+\beta_0)-(\beta_2z^2+\beta_1z+\beta_0)\}\\&&+ka^{k-1}\{-(2\beta_2(z+c)+\beta_1)-(2\beta_2z+\beta_1)\}-2k(k-1)a^{k-2}\beta_2]\\&&=\beta_2z^2+\beta_1z+\beta_0\\&\implies&(2iRa^k+1)(\beta_2z^2+\beta_1z+\beta_0)+2iRa^{k-1}\beta_2(ac+2k)z
	+P_0(z)=0,\eeas where $P_0(z)$ is a polynomial of degree $0$. Now, comparing the coefficient of $z$ from both side of the above equation, again we have $2k=-(2m+1)\pi i$, a contradiction. Similarly, considering $Q_1(z)$ and $Q_2(z)$ respectively as constant and two-degree polynomial, again we get a contradiction. Thus $Q(z)$ cannot be a second degree polynomial. \par 
	Now suppose that $Q(z)$ is a polynomial of degree $n\geq3$. Let $Q_1(z)=\gamma_{j}z^{j}+\gamma_{j-1}z^{j-1}+\ldots+\gamma_0$, $\gamma_{j}\neq 0$ and $Q_2(z)$ be of degree $n-j$, $3\leq j\leq n$ . Again using $e^{ac}=-1$, (\ref{e2.25}), (\ref{e2.26}), we have from (\ref{e2.23}) that \beas (2iRa^k+1)Q_1(z)+niRa^{k-1}\gamma_n(ac+2k)z^{j-1}+P_{j-2}(z)=0,\eeas $P_{j-2}(z)$ is a polynomial of degree $j-2$. Here, comparing the coefficient of $z^{j-1}$ from both sides of the above equation, again we have $2k=-(2m+1)\pi i$, a contradiction. Similarly, $Q_2(z)$ cannot be a polynomial of degree $n-j$, as in that case also we get a contradiction. So, $Q(z)$ cannot be a polynomial of degree $n\geq 3$.\par Thus $Q(z)$ must be constant, say $Q$. Therefore, we must have the form of the solution is \beas f(z)=\frac{Q_1e^{az+b}+Q_2e^{-az-b}}{2},\eeas such that $Q_1Q_2=Q$.\\
	{\bf Case 2:} If $e^{ac}=1$, then (\ref{e2.21}) becomes \bea\label{e2.28} \left[p_1(z+c)-p_1(z)-\frac{Q_1(z)}{iR(z)}\right]e^{P(z)}+\left[p_2(z+c)-p_2(z)+\frac{Q_2(z)}{iR(z)}\right]e^{-P(z)}=0.\eea Applying {\it Lemma \ref{Borel}} and using (\ref{e2.22}) on (\ref{e2.28}), we have \bea\label{e2.29} iR(z)\sum\limits_{l=0}^{k}{k\choose l}a^{k-l}\left[Q_1^{(l)}(z+c)-Q_1^{(l)}(z)\right]=Q_1(z)\eea and 
	\bea\label{e2.30} iR(z)\sum\limits_{l=0}^{k}{k\choose l}(-a)^{k-l}\left[Q_2^{(l)}(z)-Q_2^{(l)}(z+c)\right]=Q_2(z).\eea
	Note that the highest degree of $\sum\limits_{l=0}^{k}{k\choose l}a^{k-l}[Q_1^{(l)}(z+c)-Q_1^{(l)}(z)]$ and $\sum\limits_{l=0}^{k}{k\choose l}(-a)^{k-l}[Q_2^{(l)}(z)-Q_2^{(l)}(z+c)]$ are $k_1-1$ and $k_2-1$ respectively. Also we see that none of $Q_1(z)$ and $Q_2(z)$ be constants, i.e., $\deg{Q(z)}\geq 2$. Comparing the total degree of the equations (\ref{e2.29}) and (\ref{e2.30}), we can deduce that $\deg {R(z)}=1$. Moreover, \beas R(z)=\frac{Q_1(z)}{P(Q_1)}=\frac{Q_2(z)}{(-1)^{k-l+1}P(Q_2)}\eeas such that $$P(x)=i\sum\limits_{l=0}^{k}{k\choose l}a^{k-l}\left[x^{(l)}(z+c)-x^{(l)}(z)\right].$$
\end{proof}

\begin{proof} [\bf\underline{Proof of Theorem \ref{t1.2}}]
	Assume that $f(z)$ is a finite order transcendental entire solution of (\ref{e2.7}), then \bea\label{e2.31}[f(z)+iR(z)(Af^{(m)}(z+c)+Bf^{(n)}(z))][f(z)-iR(z)(Af^{(m)}(z+c)+Bf^{(n)}(z))]=1.\eea Thus both of $f(z)+iR(z)(Af^{(m)}(z+c)+Bf^{(n)}(z))$ and $f(z)-iR(z)(Af^{(m)}(z+c)+Bf^{(n)}(z))$ have no zeros. Combining (\ref{e2.31}), with Hadamard factorization theorem, we assume that $$f(z)+iR(z)(Af^{(m)}(z+c)+Bf^{(n)}(z))=e^{P(z)}$$ and $$f(z)-iR(z)(Af^{(m)}(z+c)+Bf^{(n)}(z))=e^{-P(z)},$$ where $P(z)$ is a non-constant polynomial, otherwise $f(z)$ will be constant. Thus we have, \bea\label{e2.32}f(z)=\frac{e^{P(z)}+e^{-P(z)}}{2}\eea and \bea\label{e2.33}Af^{(m)}(z+c)+Bf^{(n)}(z)=\frac{e^{P(z)}-e^{-P(z)}}{2iR(z)}.\eea Using (\ref{e2.32}) in (\ref{e2.33}), we have 
	\bea\label{e2.34} &&\frac{e^{P(z)}-e^{-P(z)}}{2iR(z)}\nonumber\\&&=\frac{A}{2}\left[p_1(z+c)e^{P(z+c)}+p_2(z+c)e^{-P(z+c)}\right]+\frac{B}{2}\left[q_1(z)e^{P(z)}+q_2(z)e^{-P(z)}\right],\eea where 
	\beas p_1(z+c)=P'(z+c)^m+M_{1,m-1}\left(P'(z+c),P''(z+c),\ldots,P^{(m)}(z+c)\right),\eeas
	\beas p_2(z+c)=(-1)^mP'(z+c)^m+M_{2,m-1}\left(P'(z+c),P''(z+c),\ldots,P^{(m)}(z+c)\right),\eeas
	\beas q_1(z)=P'(z)^n+N_{1,n-1}\left(P',P'',\ldots,P^{(n)}\right),\eeas \beas q_2(z)=(-1)^nP'(z)^n+N_{2,n-1}\left(P',P'',\ldots,P^{(n)}\right),\eeas 
	where $M_{i,m-1}$, $N_{i,n-1}$ $(i=1,2)$ are differential polynomials of $P'$ with degree $m-1$, $n-1$.
	Then (\ref{e2.34}) can be written as 
	\bea\label{e2.35} &&\left[iAR(z)p_1(z+c)e^{\Delta_c{P(z)}}+iBR(z)q_1(z)-1\right]e^{P(z)}\nonumber\\&&+\left[iAR(z)p_2(z+c)e^{-\Delta_c{P(z)}}+iBR(z)q_2(z)+1\right]e^{-P(z)}=0.\eea
	Applying {\it Lemma \ref{Borel}} on (\ref{e2.35}), we have 
	\bea\label{e2.36} iAR(z)p_1(z+c)e^{\Delta_c{P(z)}}+iBR(z)q_1(z)-1=0\eea and 
	\bea\label{e2.37} iAR(z)p_2(z+c)e^{-\Delta_c{P(z)}}+iBR(z)q_2(z)+1=0.\eea
	Now we show that $P(z)$ is a one-degree polynomial. If not, suppose that $\deg(P(z))\geq 2$. Applying {\it Lemma \ref{Borel}} on (\ref{e2.36}) and (\ref{e2.37}), we have $p_1(z+c)=0$ and $p_2(z+c)=0$, a contradiction, which concludes that $P(z)$ is a one-degree polynomial, say, 
	\bea\label{e2.38} P(z)=az+b.\eea Using (\ref{e2.38}), we have 
	\bea\label{e2.39} p_1(z)=a^m,\;p_2(z)=(-a)^m,\;\;q_1(z)=a^n,\;\;q_2(z)=(-a)^n.\eea
	Using (\ref{e2.38}) and (\ref{e2.39}) in (\ref{e2.35}), we have 
	\bea\label{e2.40} &&\left[iAa^mR(z)e^{ac}+iBa^nR(z)-1\right]e^{az+b}\nonumber\\&&+\left[iA(-a)^mR(z)e^{-ac}+iB(-a)^nR(z)+1\right]e^{-az-b}=0.\eea Again applying {\it Lemma \ref{Borel}} on (\ref{e2.40}), we have \bea\label{e2.41} iAa^mR(z)e^{ac}+iBa^nR(z)-1=0\;and\;iA(-a)^mR(z)e^{-ac}+iB(-a)^nR(z)+1=0.\eea From here, we can conclude that $R(z)$ is constant, say, $R$. So, (\ref{e2.41}) becomes 
	\bea\label{e2.42} iR(a^me^{ac}A+a^nB)=1\eea and \bea\label{e2.43}iR((-a)^me^{-ac}A+(-a)^nB)=-1.\eea
	
	\textbf{Case I:} Let $m$, $n$ be even. Then (\ref{e2.43}) becomes \bea\label{e2.44}iR(a^me^{-ac}A+a^nB)=-1.\eea Now eliminating $e^{ac}$ from (\ref{e2.42}) and (\ref{e2.44}) we get, $R^2(a^{2m}A^2-a^{2n}B^2)=1$, which implies, $\displaystyle\frac{a^mA}{a^nB}\neq\pm1$, i.e., $a^{m-n}\neq\pm\frac{B}{A}$.\\ Again from (\ref{e2.42}) and (\ref{e2.44}) we get, $e^{ac}=\displaystyle\frac{-a^nB\pm\sqrt{(a^nB)^2-(a^mA)^2}}{a^mA}$.\\ Also, $e^{ac}\not\in\left\{\pm1,\left(\displaystyle\frac{a^mA}{a^nB}\right)^{\pm 1}\right\}$.
	
	\textbf{Case II:} Let $m$, $n$ be odd. Then (\ref{e2.43}) becomes \bea\label{e2.45}iR(a^me^{-ac}A+a^nB)=1.\eea So, in this case,  from (\ref{e2.42}) and (\ref{e2.45}) we get, $e^{ac}=\pm1$. Also, $a^{m-n}\neq\mp\frac{B}{A}$. Then $R=\displaystyle\frac{-i}{a^nB\pm a^mA}$.\\
	
	\textbf{Case III:} Let $m$ be even and $n$ be odd. Then (\ref{e2.43}) becomes \bea\label{e2.46}iR(a^me^{-ac}A-a^nB)=-1.\eea So, in this case, from (\ref{e2.42}) and (\ref{e2.46}) we get, $e^{ac}=\pm i$ and $a^{m-n}\neq\pm i\frac{B}{A}$. Then $R=\displaystyle\frac{-i}{a^nB\pm ia^mA}$.\\
	
	\textbf{Case IV:} Let $m$ be odd and $n$ be even. Then (\ref{e2.43}) becomes \bea\label{e2.47}iR(a^me^{-ac}A-a^nB)=1.\eea Now eliminating $e^{ac}$ from (\ref{e2.42}) and (\ref{e2.47}) we get, $R^2(a^{2n}B^2+a^{2m}A^2)=-1$, which implies, $\displaystyle\frac{a^mA}{a^nB}\neq\pm i$. Again from (\ref{e2.42}) and (\ref{e2.47}) we get, $e^{ac}=\displaystyle\frac{-a^nB\pm\sqrt{(a^nB)^2+(a^mA)^2}}{a^mA}$. Also, $e^{ac}\not\in\left\{\pm 1,\displaystyle\frac{a^mA}{a^nB},\displaystyle-\frac{a^nB}{a^mA}\right\}$.
	
\end{proof}

\begin{proof} [\bf\underline{Proof of Theorem \ref{t1.3}}]
	Assume that $f(z)$ is a finite order transcendental entire solution of (\ref{e2.8}), then \bea\label{e2.48}[f(z)+iL_c(z,f)][f(z)-iL_c(z,f)]=1.\eea Proceeding in the same way as done in the previous theorem, from (\ref{e2.48}), we have, \bea\label{e2.49}f(z)=\frac{e^{P(z)}+e^{-P(z)}}{2}\eea and \bea\label{e2.50}L_c(z,f)=\frac{e^{P(z)}-e^{-P(z)}}{2i}.\eea From (\ref{e1.1}), (\ref{e2.49}) and (\ref{e2.50}), we have 
	\bea\label{e2.51} &&\frac{e^{P(z)}-e^{-P(z)}}{2i}=\sum\limits_{j=0}^{\tau}a_j\frac{e^{P(z+jc)}+e^{-P(z+jc)}}{2}\nonumber\\&\implies& \sum\limits_{j=1}^{\tau}a_j\left(e^{P(z+jc)}+e^{-P(z+jc)}\right)=-(a_0+i)e^{P(z)}-(a_0-i)e^{-P(z)}.\eea
	Then (\ref{e2.51}) can be written as 
	\bea\label{e2.52} &&\left(a_0+i+\sum\limits_{j=1}^{\tau}a_je^{\Delta_{jc}P(z)}\right)e^{P(z)}+\left(a_0-i+\sum\limits_{j=1}^{\tau}a_je^{-\Delta_{jc}{P(z)}}\right)e^{-P(z)}=0.\eea
	Applying {\it Lemma \ref{Borel}} on (\ref{e2.52}), we have 
	\bea\label{e2.53} a_0+i+\sum\limits_{j=1}^{\tau}a_je^{\Delta_{jc}P(z)}=0\eea and 
	\bea\label{e2.54} a_0-i+\sum\limits_{j=1}^{\tau}a_je^{-\Delta_{jc}{P(z)}}=0.\eea
	Now we show that $P(z)$ is a one-degree polynomial. On the contrary, suppose that $\deg(P(z))\geq 2$. Applying {\it Lemma \ref{Borel}} on (\ref{e2.53}) and (\ref{e2.54}), we have $a_j=0$ for all $1\leq j\leq \tau$, which in view of (\ref{e2.8}) implies that $f(z)$ is constant, a contradiction. So, $P(z)$ is a one-degree polynomial, say, $P(z)=az+b$. Then using (\ref{e2.53}) and (\ref{e2.54}), the relation between $a$, $c$ and $a_j$, $0\leq j\leq\tau$, can be determined by (\ref{e2.9}). Also from (\ref{e2.9}), it is clear that $e^{ac}\neq\pm1$. If $\tau=1$, $a_0\neq \pm a_1$ is required.
\end{proof}
\section{Non-linear $c$-Shift Equations}
For the existence of solutions of non-linear $c$-shift equation, in 2011, Qi \cite{Xi-Qi-Polon} obtained the following theorems:
\begin{theoC}\cite{Xi-Qi-Polon}
	Let $q(z)$, $p(z)$ be polynomials and let $n$, $m$ be distinct positive integers. Then the equation \bea\label{e3.1} f^m(z) + q(z)f(z + c)^n = p(z)\eea has no transcendental entire solutions of finite order. 
\end{theoC}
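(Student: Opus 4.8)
\emph{Proof proposal.} The plan is to argue by contradiction using nothing more than the growth (Nevanlinna characteristic) of the two sides of \eqref{e3.1}, and to consume the hypothesis $m\neq n$ exactly at the decisive inequality. Suppose $f$ were a transcendental entire solution of \eqref{e3.1} of finite order $\rho$. First I would dispose of the degenerate case: if $q\equiv 0$ the equation forces $f^m=p$, which is impossible since the left side is transcendental while the right side is a polynomial; so I may assume $q\not\equiv 0$. The only external input needed is the shift-invariance of the characteristic function for finite-order functions, namely $T(r,f(z+c))=T(r,f)+S(r,f)$ (the Chiang--Feng / Halburd--Korhonen difference estimate), together with the elementary identities $T(r,f^k)=kT(r,f)$ for entire $f$ and $T(r,\phi g)=T(r,g)+O(\log r)$ for a nonzero polynomial $\phi$.

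Next I would split according to which exponent is larger. If $m>n$, I isolate $f^m(z)=p(z)-q(z)f(z+c)^n$ and take characteristics: the left side satisfies $T(r,f^m)=mT(r,f)$, while subadditivity bounds the right side by $T(r,p)+T(r,qf(z+c)^n)+O(1)=nT(r,f(z+c))+O(\log r)=nT(r,f)+S(r,f)$. Hence $mT(r,f)\le nT(r,f)+S(r,f)$, that is $(m-n)T(r,f)\le S(r,f)$, which is absurd for a transcendental $f$ because $m-n\ge 1$. If instead $n>m$, I isolate $q(z)f(z+c)^n=p(z)-f^m(z)$; now the left side has characteristic $nT(r,f)+S(r,f)$ while the right side is bounded by $mT(r,f)+O(\log r)$, yielding $(n-m)T(r,f)\le S(r,f)$, again impossible. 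Either way the distinctness of $m$ and $n$ is precisely what makes the coefficient $|m-n|$ nonzero and delivers the contradiction.

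The delicate point, and the main obstacle, is making sure the error term produced by the shift really is $o(T(r,f))$. The unconditional Chiang--Feng estimate only gives an error $O(r^{\rho-1+\varepsilon})$, which need not be $o(T(r,f))$ for every $r$. To close the argument cleanly I would read the inequalities along a sequence $r_k\to\infty$ realizing the order, so that $T(r_k,f)\ge r_k^{\rho-\delta}$; choosing $\varepsilon,\delta$ small with $\varepsilon+\delta<1$ makes $r_k^{\rho-1+\varepsilon}=o(T(r_k,f))$, whence $|m-n|\,T(r_k,f)\le o(T(r_k,f))$ along $r_k$, the contradiction sought. Alternatively one simply invokes the finite-logarithmic-measure form of the difference lemma, for which $S(r,f)=o(T(r,f))$ holds outside an exceptional set, and evaluates the inequality at $r$ avoiding that set.
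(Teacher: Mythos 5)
Your proof is correct and complete. The paper quotes Theorem C from \cite{Xi-Qi-Polon} without proof, so there is no in-paper argument to compare against; but your growth comparison is precisely the mechanism the paper itself uses in the first step of the proof of Theorem \ref{t2.4} (the case $T(r,p(z))=S(r,f)$, which is automatic here because $p$ and $q$ are polynomials): one sets $mT(r,f)=T(r,f^m)$ against $nT(r,f)+S(r,f)$ via the shift-invariance of the characteristic, and the hypothesis $m\neq n$ delivers the contradiction, while your final paragraph correctly disposes of the only delicate point, namely that the error term in the difference estimate is genuinely $o(T(r,f))$ along sufficiently many radii.
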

In 2015, Qi-Liu-Yang \cite{Qi-Liu-Yang-Irn} obtained the meromorphic variant of {\it{Theorem C}} and improved this as follows: 
\begin{theoD}\cite{Qi-Liu-Yang-Irn}
	Let $f(z)$ be a transcendental meromorphic function with finite order, $m$ and $n$ be two positive integers such that $m \geq n + 4$, $p(z)$ be a meromorphic function satisfying $\ol N\left(r,\frac{1}{p(z)}\right) = S(r,f)$ and $q(z)$ be a non-zero meromorphic function satisfying that $T(r,q(z)) = S(r,f)$. Then, $f(z)$ is not a solution of equation \bea\label{e3.2} f^m(z) + q(z)f(z + c)^n = p(z).\eea
\end{theoD}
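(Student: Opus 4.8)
The plan is to argue by contradiction. Suppose (\ref{e3.2}) has a finite order transcendental meromorphic solution $f$, put $g(z)=f(z+c)$ and rewrite it as $f^m=p-q\,g^n$. Throughout I would use the finite order difference machinery $T(r,g)=T(r,f)+S(r,f)$, $\overline{N}(r,g)=\overline{N}(r,f)+S(r,f)$, $\overline{N}(r,1/g)=\overline{N}(r,1/f)+S(r,f)$ and $m(r,g/f)=S(r,f)$, while the hypotheses enter only through $T(r,q)=S(r,f)$ and $\overline{N}(r,1/p)=S(r,f)$. The heart of the matter is a two sided estimate for the characteristic of the auxiliary function $u:=f^m/p$. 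Since $u+q g^n/p=1$, we have $g^n=(1-u)p/q$, whence $f^m/g^n=qu/(1-u)$; as $w\mapsto w/(1-w)$ is M\"obius and $T(r,q)=S(r,f)$, this yields $T(r,u)=T(r,f^m/g^n)+S(r,f)$. Writing $f^m=(f^m/g^n)g^n$ and using $T(r,g^n)=nT(r,f)+S(r,f)$ gives $mT(r,f)\le T(r,f^m/g^n)+nT(r,f)+S(r,f)$, so that $T(r,u)\ge (m-n)T(r,f)+S(r,f)$. It is precisely here that the shift invariance $T(r,g)=T(r,f)+S(r,f)$ of finite order functions is indispensable.

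For the matching upper bound I would apply the Second Main Theorem to $u$ with the three targets $0,1,\infty$ and read the counting functions off the equation. A pole of $f$ cannot be a pole of $u$, because there the factor $p=f^m+qg^n$ in the denominator carries the same pole as $f^m$; hence poles of $u$ occur only at the zeros of $p$ and $\overline{N}(r,u)=S(r,f)$. The zeros of $u$ lie among the zeros of $f$ and the poles of $g$, and the $1$-points of $u$ (where $u-1=-qg^n/p$) lie among the poles of $f$ and the zeros of $g$. Replacing $\overline{N}(r,g)$, $\overline{N}(r,1/g)$ by $\overline{N}(r,f)$, $\overline{N}(r,1/f)$ and discarding the $S(r,f)$ coming from $q$ and from $\overline{N}(r,1/p)$, the Second Main Theorem gives $T(r,u)\le 2\overline{N}(r,f)+2\overline{N}(r,1/f)+S(r,f)$.

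The decisive refinement is a multiplicity discount on the zeros of $f$, which produces the sharp constant. Each zero of $f$ forces a zero of $u=f^m/p$ of order at least $m$, so $m\,\overline{N}(r,1/f)\le N(r,1/u)\le T(r,u)+O(1)$, i.e. $\overline{N}(r,1/f)\le \frac{1}{m}T(r,u)+S(r,f)$. Substituting this together with the crude bound $\overline{N}(r,f)\le T(r,f)$ into the upper estimate yields $\left(1-\frac{2}{m}\right)T(r,u)\le 2T(r,f)+S(r,f)$, that is $T(r,u)\le \frac{2m}{m-2}T(r,f)+S(r,f)$. Comparing with the lower bound forces $(m-n)T(r,f)\le \frac{2m}{m-2}T(r,f)+S(r,f)$, and since $(m-n)-\frac{2m}{m-2}=\frac{2(m-4)}{m-2}>0$ as soon as $m\ge n+4$ (so that $m\ge 5$), this gives $T(r,f)\le S(r,f)$, the desired contradiction. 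The threshold $m\ge n+4$ appears exactly as the condition making $\frac{2(m-4)}{m-2}$ positive on the extremal line $m-n=4$.

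The main obstacle is the bookkeeping of poles in the upper bound. One must keep the poles of $f$ apart from the poles of its shift $g$, since they feed different targets of $u$ (the $1$-points and the zeros, respectively), treat the points where $f$ and $g$ blow up simultaneously, and confirm that $u$ is genuinely free of poles over the poles of $f$. The fortunate point is that a multiplicity discount is needed only on the zeros of $f$: this lets one bound $\overline{N}(r,f)$ simply by $T(r,f)$ and thereby avoid any discount at the common poles of $f$ and $g$, where $u$ may approach a value other than $0$ or $1$ and a naive multiplicity argument would break down. Verifying the difference identities for the shifted counting functions, for which finite order is essential, is the remaining technical input.
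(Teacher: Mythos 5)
Your argument is correct in substance but follows a genuinely different route from the one in the paper. The paper does not reprove Theorem D directly, but its proof of the generalization (Theorem 3.1) proceeds by differentiating the equation, dividing, and writing $f^m$ as a quotient of the auxiliary functions $H$ and $G$ built from logarithmic derivatives; the estimate $mT(r,f)\le\cdots$ then comes from the Halburd--Korhonen lemma on $m\left(r,\frac{f(z+c)}{f(z)}\right)$ together with a case-by-case count of the poles of $H$ and $G$, with the cases $T(r,p)=S(r,f)$ and $T(r,p)\ne S(r,f)$ treated separately. You instead apply the Second Main Theorem to the single function $u=f^m/p$ with targets $0,1,\infty$, getting the lower bound $T(r,u)\ge(m-n)T(r,f)+S(r,f)$ from the M\"obius relation $f^m/g^n=qu/(1-u)$ and the upper bound from the counting functions plus the multiplicity discount $m\,\overline{N}(r,1/f)\le N(r,1/u)+S(r,f)$. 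Your method avoids differentiating the equation and needs only the shift-invariance of $T$ and $\overline{N}$ for finite order functions; the paper's method is heavier but extends mechanically to the multi-term operator $L_c(z,f)$, which is what its Theorem 3.1 is after.

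One step as you state it is false, although the damage is repairable without changing your constants. At a common pole $z_0$ of $f$ and $g=f(z+c)$ where $f^m$ and $qg^n$ have poles of the same order with cancelling leading parts, $p=f^m+qg^n$ has a pole of strictly smaller order (or none), and then $u=f^m/p$ genuinely has a pole at $z_0$; so the claims that $u$ is free of poles over the poles of $f$ and that $\overline{N}(r,u)=S(r,f)$ are both wrong. The repair is to note that each pole of $f$ or of $g$ contributes to at most one of the three quantities $\overline{N}(r,u)$, $\overline{N}(r,1/u)$, $\overline{N}\left(r,\frac{1}{u-1}\right)$: it is a $1$-point, a zero, or a pole of $u$ according as $f^m$ dominates, $qg^n$ dominates, or the two balance with cancellation, and it contributes to none of them when they balance without cancellation. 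Hence the sum of the three is still at most $\overline{N}(r,1/p)+\overline{N}(r,f)+\overline{N}(r,g)+\overline{N}(r,1/f)+\overline{N}(r,1/g)+S(r,f)\le 2\overline{N}(r,f)+2\overline{N}(r,1/f)+S(r,f)$, which is exactly the bound you feed into the Second Main Theorem. With that correction the arithmetic $(m-n)-\frac{2m}{m-2}=\frac{2(m-4)}{m-2}>0$ for $m\ge n+4\ge 5$ closes the contradiction as you describe; you should also dispose of the degenerate case where $u$ is constant (it forces $mT(r,f)\le nT(r,f)+S(r,f)$ directly) before invoking the Second Main Theorem.
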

\begin{theoE}\cite{Qi-Liu-Yang-Irn}
	Let $f(z)$ be a transcendental entire function with finite order, $m$ and $n$ be two positive integers such that $m \geq n + 2$, $p(z)$ be a meromorphic function satisfying $\ol N\left(r,\frac{1}{p(z)}\right) = S(r,f)$ and $q(z)$ be a non-zero meromorphic function satisfying that $T(r,q(z)) = S(r,f)$. Then $f(z)$ is not a solution of equation (\ref{e3.2}).
\end{theoE}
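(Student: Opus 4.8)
The statement is quoted from \cite{Qi-Liu-Yang-Irn}, but here is the route I would take. Suppose, towards a contradiction, that a transcendental entire function $f$ of finite order satisfies (\ref{e3.2}). The first observation, which is exactly what makes the entire case sharper than the meromorphic one, is that $p$ can carry almost no poles: since $f$ is entire, both $f^m$ and $f(z+c)^n$ are entire, so every pole of $p=f^m+qf(z+c)^n$ must come from a pole of $q$; hence $N(r,p)\le N(r,q)\le T(r,q)=S(r,f)$ and in particular $\overline N(r,p)=S(r,f)$. Combined with the hypotheses $\overline N(r,1/p)=S(r,f)$ and $T(r,q)=S(r,f)$, this says that on the scale of $T(r,f)$ the function $p$ is essentially free of both zeros and poles.

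I would then rewrite (\ref{e3.2}) in the normalized form $F+G=1$ with $F=f^m/p$ and $G=qf(z+c)^n/p$, and apply Nevanlinna's second main theorem to $F$ with the three targets $0,1,\infty$. Poles of $F$ lie only over zeros of $p$, so $\overline N(r,F)\le \overline N(r,1/p)=S(r,f)$. For the zeros of $F-1=-G$ I would use the crude estimate $\overline N(r,1/(F-1))\le \overline N(r,1/f(z+c))+S(r,f)\le T(r,f)+S(r,f)$, invoking the finite-order shift relation $T(r,f(z+c))=T(r,f)+S(r,f)$. The decisive point is the zeros of $F$ itself: away from the $S(r,f)$-exceptional zeros and poles of $p$, a zero of $f$ of multiplicity $\mu$ is a zero of $F=f^m/p$ of multiplicity $m\mu\ge m$, so $\overline N(r,1/F)\le \tfrac1m N(r,1/F)+S(r,f)\le \tfrac1m T(r,F)+S(r,f)$.

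Feeding these three bounds into the second main theorem yields $T(r,F)\le \tfrac1m T(r,F)+T(r,f)+S(r,f)$, that is $T(r,F)\le \tfrac{m}{m-1}T(r,f)+S(r,f)$. A matching lower bound comes for free from the identity $F/(F-1)=-f^m/\big(qf(z+c)^n\big)$: since $w\mapsto w/(w-1)$ is a Möbius map we have $T(r,F)=T(r,F/(F-1))+O(1)$, and reading $f^m=-qf(z+c)^n\cdot F/(F-1)$ through the first main theorem gives $T(r,F)\ge (m-n)T(r,f)-S(r,f)$. Combining the two estimates produces $\big(m-n-\tfrac{m}{m-1}\big)T(r,f)\le S(r,f)$. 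Since $n\ge1$ and $m\ge n+2$ we have $m\ge3$ and $m-n\ge2$, whence $m-n-\tfrac{m}{m-1}\ge 2-1-\tfrac1{m-1}=1-\tfrac1{m-1}>0$; as $f$ is transcendental, $T(r,f)\to\infty$ with $S(r,f)=o(T(r,f))$, so this is impossible and no such $f$ exists.

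The main obstacle is matching the sharp threshold $m\ge n+2$ rather than $m\ge n+3$. A second main theorem argument that treats the zeros of $f^m$ crudely (bounding $\overline N(r,1/F)$ by $T(r,f)+S(r,f)$) only delivers $m-n\le2$ with no contradiction at equality; the extra unit is recovered solely by exploiting that every zero of $F$ is forced to have multiplicity at least $m$, which is where the factor $\tfrac{m}{m-1}$ comes from. I would regard verifying $N(r,p)=S(r,f)$ and confirming that all the shift and counting estimates genuinely survive with $S(r,f)$ error terms as the routine but indispensable bookkeeping on which the whole argument rests.
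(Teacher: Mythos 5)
Your argument is correct, but it is not the route this paper takes. Strictly speaking the paper does not prove Theorem E at all -- it is quoted from \cite{Qi-Liu-Yang-Irn} -- but the method the paper uses for its generalization (Theorem \ref{t2.4} and Corollary \ref{c1.1}, where $f(z+c)$ is replaced by $L_c(z,f)$) is the differentiate-and-divide scheme of Qi--Liu--Yang: split into the cases $T(r,p)=S(r,f)$ and $T(r,p)\neq S(r,f)$, differentiate the equation, solve for $f^m$ as a quotient involving logarithmic derivatives, and then bound $mT(r,f)$ by proximity functions (via Lemma \ref{l7}, the difference analogue of the logarithmic derivative lemma) plus a careful pole count for the auxiliary functions $H$ and $G$. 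Your proposal instead normalizes to $F+G=1$ with $F=f^m/p$ and runs the second main theorem on $F$, recovering the sharp threshold $m\ge n+2$ from the fact that every zero of $F$ off the $S(r,f)$-small exceptional set has multiplicity at least $m$, so that $\ol N(r,1/F)\le \tfrac1m T(r,F)+S(r,f)$. Each approach buys something: the paper's method survives the passage to meromorphic $f$ (where it costs the extra terms $(\tau+1)\ol N(r,f)$ and yields the weaker bound $m\ge(\tau+1)(n+2)+2$) and to the operator $L_c(z,f)$, whereas your second-main-theorem argument is cleaner and more self-contained in the entire case but leans on $\ol N(r,F)=S(r,f)$, which is exactly what fails for meromorphic $f$. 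Two small points you should make explicit: (i) before applying the second main theorem and the M\"obius identity $T(r,F/(F-1))=T(r,F)+O(1)$ you must rule out $F$ being constant, which is immediate since $F\equiv c_0$ forces $mT(r,f)\le nT(r,f)+S(r,f)$ (or $q\equiv 0$ when $c_0=1$); and (ii) the error term of the second main theorem is $S(r,F)$, so one should note that $T(r,F)=O(T(r,f))+S(r,f)$ (which follows from $T(r,p)\le (m+n)T(r,f)+S(r,f)$) to convert it into $S(r,f)$. Neither is a real obstacle.
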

In this paper we extend {\it{Theorems D-E}} at the expense of replacing $f(z+c)$ by $L_c(z,f)$.
\begin{theo}\label{t2.4}
	Let $f(z)$ be a transcendental meromorphic function with finite order, $ m$ and $n$ be two positive integers such that $m\geq (\tau+1)(n+2) + 2$, $p(z)$ be a meromorphic function satisfying $\ol N\left(r,\frac{1}{p(z)}\right) = S(r,f)$ and $q(z)$ be a non-zero meromorphic function satisfying that $T(r,q(z)) = S(r,f)$. Then, $f(z)$ is not a solution of the non-linear $c$-shift equation  \bea\label{e3.3}f^m(z) + q(z)(L_c(z,f))^n = p(z).\eea
\end{theo}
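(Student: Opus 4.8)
The plan is to suppose, for contradiction, that $f$ is a transcendental meromorphic solution of finite order of $(\ref{e3.3})$ and then to play an upper bound against a lower bound for the Nevanlinna characteristic of the auxiliary function $g=f^m/p$. Throughout I would abbreviate $\phi=L_c(z,f)=\sum_{j=0}^{\tau}a_jf(z+jc)$ and may assume $\phi\not\equiv0$, since otherwise $(\ref{e3.3})$ degenerates to $f^m=p$ and the shift operator plays no role. Because $f$ has finite order, the standard shift invariance of the Nevanlinna functions gives $T(r,f(z+jc))=T(r,f)+S(r,f)$ and $\overline N(r,f(z+jc))=\overline N(r,f)+S(r,f)$, so that $T(r,\phi)\le(\tau+1)T(r,f)+S(r,f)$ and $\overline N(r,\phi)\le(\tau+1)\overline N(r,f)+S(r,f)$; also $q$ is a genuine small function by hypothesis.

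The main obstacle is that $p$ is \emph{not} assumed small (only $\overline N(r,1/p)=S(r,f)$ is available), so a direct comparison of characteristics is circular: the identity $p=f^m+q\phi^n$ forces $T(r,p)$ to be comparable with $mT(r,f)$. To circumvent this I would pass to $g=f^m/p$, which satisfies $g-1=-q\phi^n/p$ and $g=f^m/(f^m+q\phi^n)$. The virtue of this substitution is that the poles get tamed: at a pole of $f$ one has $p\sim f^m$ (as $m>n$), hence $g\to1$ and $g$ has no pole there, so the only poles of $g$ lie over the zeros of $p$ and $\overline N(r,g)\le\overline N(r,1/p)+S(r,f)=S(r,f)$. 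In particular $T(r,g)=O(T(r,f))$, so $S(r,g)=S(r,f)$. For the lower bound I would use $T(r,g)=T(r,g-1)+O(1)=T(r,f^m/(q\phi^n))+O(1)\ge mT(r,f)-nT(r,\phi)-S(r,f)\ge\big(m-n(\tau+1)\big)T(r,f)-S(r,f)$.

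For the upper bound I would apply the Second Main Theorem to $g$ with the three values $0,1,\infty$ and carefully distribute the relevant points. The zeros of $f$, together with the poles of $\phi$ lying off the poles of $f$ (the shifted poles, contributing at most $\tau\,\overline N(r,f)+S(r,f)$), account for the zeros of $g$; the zeros of $\phi$ together with the poles of $f$ account for the zeros of $g-1$; and $\overline N(r,g)=S(r,f)$. This yields $T(r,g)\le\overline N(r,1/f)+\overline N(r,1/\phi)+(\tau+1)\overline N(r,f)+S(r,f)\le(2\tau+3)T(r,f)+S(r,f)$. Combining the two bounds gives $\big(m-n(\tau+1)\big)T(r,f)\le(2\tau+3)T(r,f)+S(r,f)$, that is $m\le n(\tau+1)+2\tau+3=(\tau+1)(n+2)+1$, which contradicts the hypothesis $m\ge(\tau+1)(n+2)+2$. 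The delicate step, and the one I would write out most carefully, is precisely the bookkeeping in the Second Main Theorem: checking that poles of $f$ land on the value $1$ (rather than $0$ or $\infty$) and that the shifted poles of $\phi$ land on the value $0$, since it is exactly this distribution that produces the sharp constant $(\tau+1)(n+2)+2$.
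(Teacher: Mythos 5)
Your strategy --- applying the second main theorem to the auxiliary function $g=f^{m}/p$ rather than differentiating the equation and estimating logarithmic derivatives, which is what the paper does --- is a genuinely different route, and your bookkeeping is tuned so that, if every step held, you would land exactly on the threshold $(\tau+1)(n+2)+2$. But there is a real gap at the step you yourself flagged as delicate: the claim $\ol N(r,g)=S(r,f)$, justified by ``at a pole of $f$ one has $p\sim f^{m}$ (as $m>n$).'' That justification is incorrect. The pole of $L_c(z,f)=\sum_{j}a_jf(z+jc)$ at a point $z_0$ is governed by the poles of $f$ at \emph{all} the translates $z_0+jc$, not only by the pole of $f$ at $z_0$; if $f$ has a pole of order $t$ at $z_0$ and a pole of much larger order at some $z_0+jc$ with $j\geq 1$, then $q(L_c(z,f))^{n}$ can have a pole at $z_0$ of order $\geq mt$, so $p=f^{m}+q(L_c(z,f))^{n}$ need not behave like $f^{m}$ there. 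When the two principal parts have equal order the leading terms may cancel, $p$ then has a pole of order $<mt$ (or none), and $g=f^{m}/p$ acquires a genuine pole at a pole of $f$. A priori the set of such points is controlled only by $\ol N(r,f)\leq T(r,f)$, not by $S(r,f)$; replacing your claim by the safe bound $\ol N(r,g)\leq \ol N(r,1/p)+\ol N(r,f)+S(r,f)$ turns your final inequality into $m\leq(\tau+1)(n+2)+2$, which is no longer in contradiction with the hypothesis --- the argument fails by exactly one unit.

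The gap is repairable, but it needs an actual argument rather than the assertion $p\sim f^m$: at a cancellation point the pole order of some $f(z+jc)$, $j\geq1$, must be at least $(mt-\nu)/n$ with $\nu$ the pole order of $q$ there, so counting with multiplicities and using $N(r,q)=S(r,f)$ one gets $\ol N(r,g)\leq\frac{n\tau}{m}\,N(r,f)+S(r,f)$, and since $m>n\tau$ this restores a strict contradiction. The paper never meets this phenomenon because it never forms $f^{m}/p$: it differentiates (\ref{e3.3}), solves for $f^{m}$ as a quotient built from $p'/p$ and $(f^m)'/(f^m)$, and bounds the counting functions $N(r,H)$ and $N(r,G)$ in (\ref{e3.11})--(\ref{e3.12}) directly; the price it pays is the term $n\,m(r,f)+n(\tau+1)N(r,f)$, which leads to the same constant $(\tau+1)(n+2)+1$ on the right-hand side. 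One further small point: your opening dismissal of the case $L_c(z,f)\equiv 0$ is too quick, since then (\ref{e3.3}) reduces to $f^{m}=p$, which does admit finite order transcendental solutions (e.g.\ $f=e^{z}$ with $a_0+a_1e^{c}=0$, $\tau=1$), so the theorem implicitly assumes $L_c(z,f)\not\equiv 0$; the paper's own proof is equally silent here, so this is not a defect of your approach specifically, but ``the shift operator plays no role'' is not a proof that the case cannot occur.
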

\begin{cor}\label{c1.1}
	\label{t5}Let $f(z)$ be a transcendental entire function with finite order, $ m$ and $n$ be two positive integers such that $m\geq n+2$, $p(z)$ be a meromorphic function satisfying $\ol N\left(r,\frac{1}{p(z)}\right) = S(r,f)$ and $q(z)$ be a non-zero meromorphic function satisfying that $T(r,q(z)) = S(r,f)$. Then, $f(z)$ is not a solution of the non-linear $c$-shift equation (\ref{e3.3}).
\end{cor}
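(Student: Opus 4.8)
The plan is to run the argument of Theorem~\ref{t2.4}, taking systematic advantage of the hypothesis that $f$ is entire. Suppose to the contrary that $f$ is a transcendental entire solution of~(\ref{e3.3}) of finite order. Since $f$ has no poles, none of the shifts $f(z+jc)$ do, so $L_c(z,f)$ is entire and every pole of $p=f^m+q(L_c(z,f))^n$ must come from a pole of $q$; because $T(r,q)=S(r,f)$ this gives $\ol N(r,p)=S(r,f)$, which combined with the hypothesis $\ol N(r,1/p)=S(r,f)$ says that $p$ has essentially neither zeros nor poles. This is exactly the feature that is lost in the meromorphic Theorem~\ref{t2.4}, where the poles of $f$ are fed through all $\tau+1$ shifts into $L_c(z,f)$ and hence into $p$; their disappearance here is what lets me drop the factor $(\tau+1)$ and replace the bound $(\tau+1)(n+2)+2$ by the sharper $n+2$.

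Next I would pass to the auxiliary function $F=f^m/p$, so that dividing~(\ref{e3.3}) by $p$ gives
\be F+\frac{q\,(L_c(z,f))^n}{p}=1.\ee
Reading off the value distribution of $F$: its poles lie among the zeros of $p$, so $\ol N(r,F)=S(r,f)$; apart from an $S(r,f)$-contribution its zeros are the zeros of $f$ and hence have multiplicity at least $m$, whence $\ol N(r,1/F)\le\frac1m N(r,1/F)+S(r,f)$; and, again up to an $S(r,f)$-contribution coming from the zeros of $q$ and poles of $p$, its $1$-points are the zeros of $L_c(z,f)$ and hence have multiplicity at least $n$, whence $\ol N(r,1/(F-1))\le\frac1n N(r,1/(F-1))+S(r,f)$.

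I would then distinguish two cases according to the size of $\ol N(r,1/f)$. If $\ol N(r,1/f)=S(r,f)$, then the finite-order entire $f$ has the form $f=P(z)e^{g(z)}$ with $P$ and $g$ polynomials; substituting this into~(\ref{e3.3}), expanding $f^m$ and $(L_c(z,f))^n$ as exponential sums and applying Lemma~\ref{Borel} to match dominant terms, one is driven to a contradiction with $m\ge n+2$ and $\ol N(r,1/p)=S(r,f)$. In the complementary, main case $\ol N(r,1/f)\neq S(r,f)$ one has $\ol N(r,1/F)=\ol N(r,1/f)+S(r,f)\neq S(r,f)$ and $\ol N(r,1/F)\le T(r,F)+O(1)$, so $F$ is genuinely non-small, and I would apply the truncated Second Main Theorem $T(r,F)\le\ol N(r,F)+\ol N(r,1/F)+\ol N(r,1/(F-1))+S(r,F)$. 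Feeding in $\ol N(r,F)=S(r,f)$, the two multiplicity bounds, and $N(r,1/F),N(r,1/(F-1))\le T(r,F)+O(1)$ yields
\be \Big(1-\frac1m-\frac1n\Big)\,T(r,F)\le S(r,f).\ee
For $n\ge 2$ the hypothesis $m\ge n+2$ forces $\frac1m+\frac1n\le\frac1{n+2}+\frac1n<1$, so the coefficient is positive and this contradicts $T(r,F)\neq S(r,f)$.

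The step I expect to be the main obstacle is the clean extraction of the multiplicities in the entire setting together with the disposal of the degenerate configurations. Concretely, I must verify that the zeros of $q$ and the zeros and poles of $p$ really do contribute only $S(r,f)$ to each counting function, so that the deficiencies $\Theta_F(\infty)=1$, $\Theta_F(0)\ge 1-\frac1m$, $\Theta_F(1)\ge 1-\frac1n$ are valid; this is precisely where entireness does the work that is impossible meromorphically. A second delicate point is the borderline exponent $n=1$: there the $1$-points of $F$ carry no multiplicity gain, the displayed inequality becomes vacuous, and this case must be absorbed into the few-zeros analysis via Lemma~\ref{Borel} (or treated by a separate Clunie-type estimate). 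Finally, the case reduction $f=P(z)e^{g(z)}$ and the ensuing Borel comparison, while routine in spirit, is where the explicit arithmetic of $m\ge n+2$ has to be checked term by term.
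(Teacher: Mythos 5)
Your route is genuinely different from the paper's. The paper obtains this corollary by specializing the proof of Theorem \ref{t2.4}: for entire $f$ one has $N(r,f)=\ol N(r,f)=0$ and, via Lemma \ref{l7}, $m\left(r,L_c(z,f)\right)\le m(r,f)+S(r,f)$, so the estimate (\ref{e3.8}) collapses to $mT(r,f)\le (n+1)T(r,f)+S(r,f)$, contradicting $m\ge n+2$ for \emph{every} $n\ge 1$. You instead apply the truncated Second Main Theorem to $F=f^m/p$ at the values $0,1,\infty$; this is a legitimate classical strategy, and your bookkeeping of $\ol N(r,F)$, $\ol N(r,1/F)$, $\ol N(r,1/(F-1))$ in terms of the zeros of $p$, $q$, $f$ and $L_c(z,f)$ is essentially correct. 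But there is a genuine gap: the corollary includes $n=1$ (with $m\ge 3$), and there your central inequality $\left(1-\frac1m-\frac1n\right)T(r,F)\le S(r,f)$ has a nonpositive coefficient and says nothing. You flag this yourself, but ``absorb it into the few-zeros analysis'' is not a proof; for $n=1$ the $1$-points of $F$ carry no multiplicity gain and the SMT route simply does not close, whereas the logarithmic-derivative argument of Theorem \ref{t2.4} handles $n=1$ with no extra effort.

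A second defect: in the case $\ol N(r,1/f)=S(r,f)$ you assert that $f=P(z)e^{g(z)}$ with $P$ a polynomial. That is false --- a finite-order entire function can have sparse zeros of unbounded multiplicity, so that $\ol N(r,1/f)=S(r,f)$ while $N(r,1/f)$ is comparable to $T(r,f)$. What your inequality actually yields in that case (for $n\ge 2$) is $T(r,F)=S(r,f)$, i.e.\ $f^m=\beta p$ with $\beta$ small; one must then argue separately from $q(L_c(z,f))^n=(1-\beta)p$ that $mT(r,f)\le nT(r,f)+S(r,f)$ when $\beta\not\equiv 1$, and confront the degenerate possibility $L_c(z,f)\equiv 0$ when $\beta\equiv 1$ (a configuration which, incidentally, neither your argument nor the paper's excludes). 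None of this is carried out. The clean repair is to follow the paper: run the proof of Theorem \ref{t2.4} verbatim on (\ref{e3.3}) and delete the pole-counting terms that vanish for entire $f$.
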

The next examples show that if the condition $m\geq n+2$ is omitted then the equation (\ref{e3.3}) can admit a transcendental entire solution.\par First considering $n=1$ and $m=2$ we have the following examples.
\begin{ex}
	For an odd integer $s$, the function $f(z)=e^\frac{s\pi iz}{c}+z$ is a solution of the equation $f^2(z)-zL_c(z,f)=e^\frac{2s\pi iz}{c}$, for $k\geq 2$, provided that the coefficients of $L_c(z,f)$  satisfy the following simultaneous equations:
	\beas  \left\{\begin{array}{clcr} & a_0-a_1+a_2-a_3+\ldots+(-1)^ka_k&=2, \\   & a_0+a_1+a_2+a_3+a_4+\ldots+a_k&=1, \\   & a_1+2a_2+3a_3+4a_4+\ldots+ka_k&=0.\end{array}\right.\eeas
\end{ex}
Next considering $m=n=1$ we have the following example.
\begin{ex}
	The function $f(z)=ze^{\frac{\pi iz}{c}}$ satisfies the equation $f(z)+\frac{1}{z+1}L_c(z,f)=\frac{z(z+2)}{z+1}e^{\frac{\pi iz}{c}}$ where the coefficients of  $L_c(z,f)$ is chosen such that they satisfy simultaneously the equations 
	\beas  \left\{\begin{array}{clcr}  a_0-a_1+a_2-\ldots +(-1)^{k}a_k &=&1, \\   -a_1+2a_2-3a_3+\ldots +k(-1)^{k}a_k&=&0.\end{array}\right.\eeas	
\end{ex}
To proceed further we require the following lemmas:
\begin{lem}\label{l6} \cite[Lemma 5.1]{Chiang-Feng}
	Let $f(z)$ be a finite order meromorphic function and $\epsilon>0$, then $T(r,f(z+c))=T(r,f(z))+o(r^{\sigma-1+\epsilon})+O(\log r)$ and $\sigma(f(z+c))=\sigma(f(z))$. Thus, if $f(z)$ is a transcendental meromorphic function with finite order, then we know $T(r,f(z+c))=T(r,f)+S(r,f)$.
\end{lem}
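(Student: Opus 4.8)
The plan is to bypass any direct estimation of the proximity function $m(r,f(z+c))$ by reducing everything to counting functions of $a$-points through Cartan's identity, where the shift $z\mapsto z+c$ acts by a purely geometric translation of discs. Recall Cartan's formula
\[
T(r,f)=\frac{1}{2\pi}\int_0^{2\pi}N\Big(r,\frac{1}{f-e^{i\alpha}}\Big)\,d\alpha+\log^+|f(0)|,
\]
and its counterpart for the shifted function, $T(r,f(z+c))=\frac{1}{2\pi}\int_0^{2\pi}N\big(r,\frac{1}{f(z+c)-e^{i\alpha}}\big)\,d\alpha+\log^+|f(c)|$ (up to the usual correction, absorbed in $O(\log r)$, if $f$ has a pole at $0$ or $c$). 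The $\alpha$-points of $f(z+c)$ in $|z|\le t$ are exactly the $\alpha$-points of $f$ in the translated disc $|w-c|\le t$, and since $\{|w|\le t-|c|\}\subseteq\{|w-c|\le t\}\subseteq\{|w|\le t+|c|\}$, the unintegrated counting functions satisfy $n\big(t-|c|,\frac{1}{f-e^{i\alpha}}\big)\le n\big(t,\frac{1}{f(z+c)-e^{i\alpha}}\big)\le n\big(t+|c|,\frac{1}{f-e^{i\alpha}}\big)$. This elementary translation step is the geometric heart of the matter.

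First I would record a uniform growth bound. By the First Fundamental Theorem, $N\big(t,\frac{1}{f-e^{i\alpha}}\big)\le T(t,f)+O(1)$ uniformly in $\alpha$, and since $f$ has order $\sigma$, a standard comparison $n(t,\cdot)\le \frac{1}{\log 2}N(2t,\cdot)+O(1)$ gives $n\big(t,\frac{1}{f-e^{i\alpha}}\big)=O(t^{\sigma+\epsilon})$ with constant independent of $\alpha$. Integrating the inclusion inequalities, for each fixed $\alpha$ one gets $\big|N\big(r,\frac{1}{f(z+c)-e^{i\alpha}}\big)-N\big(r,\frac{1}{f-e^{i\alpha}}\big)\big|\le \int_0^r\frac{n(t+|c|,\cdot)-n(t-|c|,\cdot)}{t}\,dt+O(\log r)$. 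The remaining integral is handled by the substitutions $u=t\pm|c|$: the principal contribution becomes $\int n(u)\big[\frac{1}{u-|c|}-\frac{1}{u+|c|}\big]\,du$, where the bracket is $O(|c|/u^2)$ away from the origin, so the $O(u^{\sigma+\epsilon})$ bound yields $\int^r u^{\sigma-2+\epsilon}\,du=O(r^{\sigma-1+\epsilon})$; the thin window $u\in[r-|c|,r+|c|]$ contributes $O(|c|\,n(r+|c|)/r)=O(r^{\sigma-1+\epsilon})$, and the pieces near the origin are $O(1)$. Crucially, every constant here is uniform in $\alpha$.

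Then I would substitute back into the two Cartan formulas and integrate in $\alpha$. Because the error $O(r^{\sigma-1+\epsilon})+O(\log r)$ is uniform over $\alpha\in[0,2\pi]$, it survives the averaging, and with $\log^+|f(c)|=O(1)$ this gives $T(r,f(z+c))=T(r,f)+O(r^{\sigma-1+\epsilon})+O(\log r)$; replacing $\epsilon$ by a slightly larger value converts the $O$ into the stated $o(r^{\sigma-1+\epsilon})$. The order equality $\sigma(f(z+c))=\sigma(f(z))$ follows at once, since the error term is of order strictly below $\sigma$, so $T(r,f(z+c))$ and $T(r,f)$ have the same growth order; alternatively one applies the same relation with $c$ replaced by $-c$ to obtain the reverse comparison. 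Finally, the reformulation $T(r,f(z+c))=T(r,f)+S(r,f)$ for transcendental finite-order $f$ is immediate, the error $O(r^{\sigma-1+\epsilon})+O(\log r)$ being $o(T(r,f))$ and hence subsumed in $S(r,f)$.

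The step I expect to be the main obstacle is the counting-function shift estimate with the \emph{sharp} exponent $\sigma-1$ rather than the trivial $\sigma$: one must genuinely extract the extra factor $r^{-1}$ from the width-$2|c|$ translation window, and simultaneously keep all constants independent of the parameter $\alpha$ so that the Cartan averaging is legitimate. The finite-order hypothesis enters decisively at exactly this point, both through the uniform bound $n(t,\cdot)=O(t^{\sigma+\epsilon})$ and through the growth of $\int n(u)/u^2\,du$; without it the window contribution would only be controlled by $n(r)$ itself and the gain of one power of $r$ would be lost.
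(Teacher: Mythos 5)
The paper offers no proof of this statement at all: it is quoted verbatim as Lemma~5.1 of Chiang--Feng, so there is no internal argument to compare yours against, and the relevant comparison is with the original source. Chiang--Feng obtain the characteristic comparison by splitting $T=m+N$: the proximity part comes from their Poisson--Jensen estimate $m\bigl(r,f(z+c)/f(z)\bigr)=O(r^{\sigma-1+\epsilon})$, and the counting part from the disc-translation argument you use. Your route through Cartan's identity is a genuine alternative that handles $m$ and $N$ in one stroke, reducing everything to the translation of counting functions; it is arguably cleaner, and your identification of where the exponent $\sigma-1$ and the finite-order hypothesis enter is exactly right. Two points deserve care. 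First, the uniformity in $\alpha$ that your averaging requires is not free: the $O(1)$ in the First Fundamental Theorem bound $N\bigl(r,\tfrac{1}{f-e^{i\alpha}}\bigr)\le T(r,f)+O(1)$ contains a term $\log\tfrac{1}{|f(0)-e^{i\alpha}|}$ which is unbounded in $\alpha$ precisely when $|f(0)|=1$ (and similarly the near-origin pieces of the $N$-integrals are only finite pointwise in $\alpha$, not uniformly); this is repaired either by normalizing so that $|f(0)|\ne 1$, or by noting that these terms are integrable in $\alpha$ with average $O(1)$ by Cartan's identity at a fixed radius. Second, the closing deduction that $O(r^{\sigma-1+\epsilon})+O(\log r)$ is $o(T(r,f))$ is not immediate when $\sigma\ge 1$, since the order is only a $\limsup$ and $T(r,f)$ need not dominate $r^{\sigma-1+\epsilon}$ outside a set of finite logarithmic measure; the clean way to reach $T(r,f(z+c))=T(r,f)+S(r,f)$ is to bound the counting-function discrepancy by $O\bigl(T(2r,f)/r\bigr)$ and invoke the standard Borel-type lemma $T(2r,f)=O(T(r,f))$ outside a set of finite logarithmic measure, together with the Halburd--Korhonen estimate (Lemma~3.2 of the paper) for the proximity part. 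This last gloss is shared by much of the literature that cites the lemma, so it is a caveat rather than a defect peculiar to your argument.
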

\begin{lem}\label{l7} \cite[Theorem 2.1]{Halburd-Korhonen-Fenn}
	Let $f(z)$ be a meromorphic function with finite order, and let $c \in \mathbb{C}$ and $\delta\in (0,1)$. Then $m\left(r, \frac{f(z + c)}{ f(z)} \right)+ m\left(r, \frac{f(z)}{ f(z + c)}\right)= o\left(\frac{T(r,f)}{ r^\delta} \right)= S(r,f)$. 
\end{lem}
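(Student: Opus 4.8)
The plan is to derive the bound directly from the Poisson--Jensen formula. First observe that, writing $u(\theta) = \left|\frac{f(re^{i\theta}+c)}{f(re^{i\theta})}\right|$ and using the elementary identity $\log^+ u + \log^+\frac1u = |\log u|$, the left-hand side of the lemma is exactly
\[
m\!\left(r,\frac{f(z+c)}{f(z)}\right) + m\!\left(r,\frac{f(z)}{f(z+c)}\right) = \frac{1}{2\pi}\int_0^{2\pi}\Big|\log\big|f(re^{i\theta}+c)\big| - \log\big|f(re^{i\theta})\big|\Big|\,d\theta .
\]
So it suffices to estimate $\big|\log|f(z+c)| - \log|f(z)|\big|$ pointwise on $|z| = r$ and then average. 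Fixing $R > r + |c|$ and applying the Poisson--Jensen formula for $\log|f|$ on $|w| < R$, with $a_\mu$ and $b_\nu$ the zeros and poles of $f$ in that disc, I would form the difference of the representations at $w = z+c$ and $w = z$ and split it into (a) a boundary integral against the difference of two Poisson kernels and (b) two sums, over the $a_\mu$ and over the $b_\nu$, of differences of the kernel terms $\log\left|\frac{R^2 - \bar a_\mu w}{R(w - a_\mu)}\right|$ evaluated at $w = z+c$ and $w=z$.

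The boundary term is routine. Since the Poisson kernel $P(z,Re^{i\phi}) = \frac{R^2 - |z|^2}{|Re^{i\phi}-z|^2}$ satisfies $|\nabla_z P| = O\!\left(\frac{R}{(R-r)^2}\right)$ on $|z| \le r$, the kernel difference at $z+c$ and $z$ is $O\!\left(\frac{|c|R}{(R-r-|c|)^2}\right)$ uniformly in $\phi$; integrating against $\big|\log|f(Re^{i\phi})|\big|$ bounds this contribution by $\frac{C|c|R}{(R-r-|c|)^2}\big(m(R,f)+m(R,1/f)\big)$.

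The zero and pole sums are the crux. For a single zero $a$ the contribution is $\log\left|\frac{(R^2-\bar a(z+c))(z-a)}{(R^2-\bar a z)(z+c-a)}\right|$; the factors $R^2 - \bar a w$ are handled exactly as in the boundary estimate, but the factor $\log\left|\frac{z-a}{z+c-a}\right| = \log\left|1 - \frac{c}{z+c-a}\right|$ is delicate. Bounding $\log|z-a|$ and $\log|z+c-a|$ separately and summing over all $n(R,1/f)$ zeros is far too wasteful and would only yield an $O(T(R,f))$ estimate. The key is to use that one is estimating a \emph{difference}, which is genuinely small away from the singularities: for $a$ far from the circle it is $O(|c|/\mathrm{dist})$, while for each fixed $a$ the average $\frac{1}{2\pi}\int_0^{2\pi}\left|\log\left|\frac{z-a}{z+c-a}\right|\right|d\theta$ is controlled by means of the identity $\frac{1}{2\pi}\int_0^{2\pi}\log|re^{i\theta}-a|\,d\theta = \log^+\!\frac{r}{|a|} + \log|a|$ (for $a\ne 0$), together with a separate, integrable treatment of the logarithmic singularities near $z\approx a$ and $z\approx a-c$. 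Summing these individual estimates over $\mu$ and $\nu$ and collecting, one obtains a bound of order $\frac{C|c|R}{(R-r)^2}\big(n(R,f)+n(R,1/f)\big)$, up to lower-order logarithmic terms. Extracting this cancellation --- rather than estimating the kernel terms one at a time --- is the main obstacle of the whole argument.

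Assembling the three pieces, taking absolute values, and averaging over $|z| = r$ gives, for every $R > r+|c|$,
\[
m\!\left(r,\frac{f(z+c)}{f(z)}\right) + m\!\left(r,\frac{f(z)}{f(z+c)}\right) \le \frac{C|c|R}{(R-r-|c|)^2}\,T(R,f) + O\big(\log^+ T(R,f) + \log R\big),
\]
where I have used $m(R,\cdot)+N(R,\cdot)\le T(R,f)+O(1)$ and $n(R,\cdot)\le C\,T(eR,f)$. It remains to choose $R$. Fixing any $\delta' \in (\delta,1)$ and taking $R - r = r^{(1+\delta')/2}$ makes the geometric factor $\frac{R}{(R-r)^2}$ of order $r^{-\delta'}$, while $R = r\,(1+o(1))$; a standard Borel-type lemma on the growth of the characteristic then gives $T(R,f) = O(T(r,f))$ for all $r$ outside a set of finite logarithmic measure (this is where finite order is used and where the exceptional set arises). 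Hence the left-hand side is $O\!\left(T(r,f)/r^{\delta'}\right) = o\!\left(T(r,f)/r^{\delta}\right)$, and since $r^\delta\to\infty$ this is in particular $o(T(r,f)) = S(r,f)$, which is the assertion.
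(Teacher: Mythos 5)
The paper does not prove this lemma at all: it is imported verbatim, with a citation, as Theorem~2.1 of Halburd--Korhonen, so there is no internal argument to compare yours against. Your sketch reconstructs the proof along exactly the lines of that source (and of Chiang--Feng): the identity $m(r,g)+m(r,1/g)=\frac{1}{2\pi}\int_0^{2\pi}\bigl|\log|g|\bigr|\,d\theta$, the Poisson--Jensen representation of $\log|f(z+c)|-\log|f(z)|$, separate estimation of the boundary integral and of the zero/pole kernel differences with the cancellation in $\log\bigl|1-\tfrac{c}{z+c-a}\bigr|$ exploited singularity by singularity, and finally the choice $R-r\asymp r^{(1+\delta')/2}$ combined with a Borel-type growth lemma. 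The architecture is correct, and you rightly identify the averaged per-zero estimate (something of the shape $\frac{1}{2\pi}\int_0^{2\pi}\bigl|\log\bigl|\tfrac{z-a}{z+c-a}\bigr|\bigr|\,d\theta\le C\tfrac{|c|}{r}\bigl(1+\log^+\tfrac{r}{|c|}\bigr)$, uniformly in $a$) as the crux; as an outline it is acceptable that this is only gestured at, since the idea you describe is the right one.

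There is, however, one step that does not close as written. You bound the counting functions by $n(R,\cdot)\le C\,T(eR,f)$ and then invoke a growth lemma valid for $R=r(1+o(1))$; but $eR$ is not $r(1+o(1))$, and a statement of the form $T(er)=O(T(r))$ outside a set of finite logarithmic measure fails in general for non-decreasing functions of finite order --- it is precisely what the standard Borel-type lemmas do \emph{not} give, and it is the reason the literature works with radii whose gap is $o(r)$. The standard repair is to estimate $n(R,\cdot)\le N(R'',\cdot)/\log(R''/R)$ with $R''=R+R^{(1+\delta')/2}$, which costs a factor of order $R^{(1-\delta')/2}$; this factor is then absorbed by the coefficient $|c|/(R-r)\asymp |c|\,r^{-(1+\delta')/2}$ multiplying the zero/pole sums, leaving a contribution $O\bigl(|c|\,r^{-\delta'}T(R'',f)\bigr)$ with $R''$ still within the range $r(1+o(1))$ where your Borel lemma applies. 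With that adjustment, and the per-zero average written out in full, your argument is complete and coincides with the proof in the cited reference.
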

\begin{lem}\label{l8}\cite{Heittokangas+4} 
	Let $f$ be a non-constant meromorphic function of finite order and $c\in\mathbb{C}$. Then 
	\beas N(r,\infty;f(z+c))\leq N(r,\infty;f(z))+S(r,f),\;\;\;\ol N(r,\infty;f(z+c))\leq \ol N(r,\infty;f)+S(r,f).\eeas 
\end{lem}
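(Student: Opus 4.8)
The plan is to prove the two inequalities separately, both resting on the elementary observation that the poles of $f(z+c)$ are precisely the $(-c)$-translates of the poles of $f$, with multiplicities preserved: a point $z_0$ is a pole of $f(z+c)$ of order $p$ exactly when $z_0+c$ is a pole of $f$ of order $p$. For the unreduced counting function $N(r,\infty;\,\cdot\,)$ I would not count poles at all, but instead use the first–main–theorem splitting $T(r,g)=m(r,g)+N(r,\infty;g)$. Applying $m(r,AB)\le m(r,A)+m(r,B)$ to the two factorizations $f(z+c)=f(z)\cdot\frac{f(z+c)}{f(z)}$ and $f(z)=f(z+c)\cdot\frac{f(z)}{f(z+c)}$, and invoking \emph{Lemma \ref{l7}}, which makes both $m\!\left(r,\frac{f(z+c)}{f(z)}\right)$ and $m\!\left(r,\frac{f(z)}{f(z+c)}\right)$ equal to $S(r,f)$, I obtain $m(r,f(z+c))=m(r,f)+S(r,f)$. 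Combined with \emph{Lemma \ref{l6}}, i.e. $T(r,f(z+c))=T(r,f)+S(r,f)$, this gives
\[
N(r,\infty;f(z+c))=T(r,f(z+c))-m(r,f(z+c))=N(r,\infty;f)+S(r,f),
\]
an equality up to $S(r,f)$ that in particular yields the first stated inequality.

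For the reduced counting function $\overline{N}(r,\infty;\,\cdot\,)$ the splitting $T=m+N$ is useless, so here I would return to the geometric count. The pole–translation observation gives the pointwise bound $\overline{n}(t,f(z+c))\le \overline{n}(t+|c|,f)$ for every $t\ge0$, since the distinct poles of $f(z+c)$ in $\{|z|\le t\}$ sit among the distinct poles of $f$ in $\{|z+c|\le t\}\subseteq\{|z|\le t+|c|\}$. Integrating this against $dt/t$ and substituting $u=t+|c|$, I would expand the resulting weight as $\frac{1}{u-|c|}=\frac1u+O\!\left(u^{-2}\right)$; using the finite–order growth estimate $\overline{n}(u,f)\le n(u,f)=O(u^{\rho+\epsilon})$ (which follows from $n(u,f)\log 2\le N(2u,f)\le T(2u,f)$), the $O(u^{-2})$ correction integrates to $O(r^{\rho-1+\epsilon})+O(\log r)$. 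This produces
\[
\overline{N}(r,\infty;f(z+c))\le \overline{N}(r+|c|,\infty;f)+O(r^{\rho-1+\epsilon})+O(\log r),
\]
with the contribution of a possible pole at the origin adding only a further $O(\log r)$.

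It remains to absorb the radius shift from $r$ to $r+|c|$, and this is where finite order does the real work and where I expect the only genuine difficulty. Writing the increment as $\overline{N}(r+|c|,\infty;f)-\overline{N}(r,\infty;f)\le \frac{|c|}{r}\,\overline{n}(r+|c|,f)+O(r^{-1})$ and using $\overline{n}(r+|c|,f)=O(r^{\rho+\epsilon})$ again, this increment is also $O(r^{\rho-1+\epsilon})$. Hence every error term is of the form $O(r^{\rho-1+\epsilon})+O(\log r)$, which is $S(r,f)$ in exactly the sense already used for the error term of \emph{Lemma \ref{l6}} (possibly outside an exceptional set of finite logarithmic measure, and using transcendence to absorb $O(\log r)$). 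The hard part is precisely the justification that this $O(r^{\rho-1+\epsilon})$ increment is genuinely $o(T(r,f))$ off the exceptional set rather than a mere polynomial bound; once that is granted — as it is in \emph{Lemma \ref{l6}} — the second inequality follows, and the same computation with $n$ in place of $\overline{n}$ reproves the first.
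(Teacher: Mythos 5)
The paper offers no proof of this lemma at all --- it is quoted as a known result from \cite{Heittokangas+4} --- so the only meaningful comparison is with the standard argument in that source. Judged on its own terms, your derivation of the first inequality is complete and correct: the factorizations $f(z+c)=f(z)\cdot\frac{f(z+c)}{f(z)}$ and $f(z)=f(z+c)\cdot\frac{f(z)}{f(z+c)}$ together with Lemma \ref{l7} give $m(r,f(z+c))=m(r,f)+S(r,f)$, and subtracting this from the conclusion of Lemma \ref{l6} yields $N(r,\infty;f(z+c))=N(r,\infty;f)+S(r,f)$, which is stronger than what is asserted.

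The genuine gap is in the $\overline N$ half, and you have located it yourself without closing it. The geometric part is fine: $\overline n(t,f(z+c))\le\overline n(t+|c|,f)$ gives $\overline N(r,\infty;f(z+c))\le\overline N(r+|c|,\infty;f)+O(\log r)$, and the radius-shift increment is bounded by $\frac{|c|}{r}\,\overline n(r+|c|,f)=O(r^{\rho-1+\epsilon})$. But the last step, ``$O(r^{\rho-1+\epsilon})=S(r,f)$, as granted in Lemma \ref{l6},'' is not a deduction. The order is only a $\limsup$, so for $\rho\ge 1$ a finite-order characteristic may fall below $r^{\rho-1+\epsilon}$ on a set of infinite logarithmic measure; a polynomial bound on the error therefore does not by itself yield $o(T(r,f))$ outside an admissible exceptional set, and Lemma \ref{l6} is an assertion about $T$, not about the increment of $\overline N$. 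The missing ingredient --- and the way \cite{Heittokangas+4} and \cite{Halburd-Korhonen-Fenn} actually argue --- is the Halburd--Korhonen lemma on non-decreasing functions: if $\phi$ is continuous, non-decreasing and of finite order (hyper-order less than one suffices) and $s>0$, then $\phi(r+s)=\phi(r)+o(\phi(r))$ for all $r$ outside a set of finite logarithmic measure. Applying this with $\phi(r)=\overline N(r,\infty;f)$, which satisfies $\phi(r)\le T(r,f)+O(1)$, absorbs the shift from $r$ to $r+|c|$ at once and makes the $O(r^{\rho-1+\epsilon})$ computation unnecessary; the same device applied to $N(r,\infty;f)$ also re-proves your first inequality without any appeal to Lemma \ref{l7}.
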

\begin{proof}[\bf\underline{Proof of Theorem \ref{t2.4}}]
	Suppose by contradiction that $f(z)$ is a transcendental meromorphic function with finite order satisfying equation (\ref{e3.3}).\par If $T(r,p(z)) = S(r,f)$, then applying {\it Lemma \ref{l6}} to equation (\ref{e3.3}), we have \beas m.T(r,f)&=&T(r,f^m)\\&=&T(r,p(z)-q(z)(L_c(z,f))^n)\\&=&T(r,L_c(z,f)^n)+S(r,f)\\&\leq& (\tau+1)n.T(r,f)+S(r,f),\eeas
	which contradicts the assumption that $m \geq (\tau+1)(n+2) + 2$.\par If $T(r,p(z)) \neq S(r,f)$, differentiating equation (\ref{e3.3}), we get \bea\label{e3.4}(f^m(z))^{\prime} + (q(z)(L_c(z,f))^n)^{\prime} = p^{\prime}(z).\eea
	\par Next dividing (\ref{e3.4}) by (\ref{e3.3}) we have 
	\bea\label{e3.5}&& p^{\prime}(z)[f^m(z)+q(z)(L_c(z,f))^n]=p(z)[(f^m(z))^{\prime}+(q(z)(L_c(z,f))^n)^{\prime}]\nonumber\\ &\implies& f^m(z)=\displaystyle\frac{\frac{p^{\prime}(z)}{p(z)}q(z)(L_c(z,f))^n-(q(z)(L_c(z,f))^n)^{\prime}}{\frac{(f^m(z))^{\prime}}{f^m(z)}-\frac{p^{\prime}(z)}{p(z)}}. \eea
	\par First observe that $\frac{(f^m(z))^{\prime}}{ f^m(z)} -\frac{p^{\prime}(z)}{p(z)}$  cannot vanish identically. Indeed, if $\frac{(f^m(z))^{\prime}}{f^m(z)}-\frac{p^{\prime}(z)}{p(z)}\equiv 0$, then we get $p(z) = \alpha f^m(z)$, where $\alpha$ is a non-zero constant. Substituting the above equality to equation (\ref{e3.3}), we have $q(z)(L_c(z,f))^n = (\alpha-1)f^m(z)$. From {\it Lemma \ref{l6}} and the above equation, we immediately see as above that $mT(r,f) \leq (\tau+1) nT(r,f) + S(r,f)$, which is a contradiction to $m \geq (\tau+1)(n+2) + 2$.  
	From equation (\ref{e3.5}), we know \bea\label{e3.6} mT(r,f) &=& T(r,f^m)\nonumber\\ &\leq& m\left(r,q(z)(L_c(z,f))^n\right) + m\left(r,\frac{p^{\prime}(z)}{p(z)} - \frac{(q(z)(L_c(z,f))^n)^{\prime}}{ q(z)(L_c(z,f))^n }\right)\nonumber\\& +& N\left(r, \frac{p^{\prime}(z)}{p(z)}\; q(z)(L_c(z,f))^n-(q(z)(L_c(z,f))^n)^{\prime}\right)\nonumber\\& +& m\left(r,\frac{ (f^m(z))^{\prime}}{ f^m(z)} - \frac{p^{\prime}(z)}{p(z)}\right)+ N\left(r, \frac{(f^m(z))^{\prime}}{ f^m(z)}-{ \frac{p^{\prime}(z)}{p(z)}}\right)+ S(r,f).\eea 
	As {\it Lemma \ref{l6}} together with equation (\ref{e3.3}) implies that \beas(m-(\tau+1)n)T(r,f) + S(r,f) \leq T(r,p(z)) \leq (m + (\tau+1)n)T(r,f) + S(r,f),\eeas 
	we conclude that \bea\label{e3.7} S(r,p(z)) = S(r,f).\eea Applying {\it Lemmas \ref{l6}}, {\it \ref{l7}} and (\ref{e3.7}) to equation (\ref{e3.6}), we obtain that \bea\label{e3.8} mT(r,f) &\leq& nm(r,f) + N\left(r, \frac{p^{\prime}(z)}{p(z)} q(z)(L_c(z,f))^n-(q(z)(L_c(z,f))^n)^{\prime}\right) \nonumber\\&& +N\left(r, \frac{(f^m(z))^{\prime}}{ f^m(z)}-{ \frac{p^{\prime}(z)}{p(z)}}\right)+ S(r,f).\eea
	Let \bea\label{e3.9} H(z)= \frac{p^{\prime}(z)}{p(z)} q(z)(L_c(z,f))^n-(q(z)(L_c(z,f))^n)^{\prime}\eea and 
	\bea\label{e3.10} G(z)=\frac{(f^m(z))^{\prime}}{ f^m(z)}-{ \frac{p^{\prime}(z)}{p(z)}}. \eea
	\par First of all, we deal with $N(r,H(z))$. From (\ref{e3.3}) and (\ref{e3.9}), we know the poles of $H(z)$ are at the zeros of $p(z)$ and at the poles of $f(z)$, $f(z +jc)$, $(j=1,2,\ldots,\tau)$ and $q(z)$. Poles of $p(z)$ will not contribute towards the poles of $H(z)$ as from the equation (\ref{e3.3}) we know that the poles of $p(z)$ should be at the poles of $f(z)$, $f(z + jc)$, $(j=1,2,\ldots,\tau)$ and $q(z)$. We note that $T(r,q(z)) = S(r,f)$. \par 
	If $z_0$ is a zero of $p(z)$ then by (\ref{e3.9}), $z_0$ is at most a simple pole of $H(z)$. If $z_0$ is a pole of $f(z)$ of multiplicity $t$ but not a pole of $f(z + jc)$, $j=1,\ldots,\tau$, then $z_0$ will be a pole of $H(z)$ of multiplicity at most $tn+1$. Next suppose $z_1$ be any pole of $f(z)$ of multiplicity $t_0$ and a pole of at least one $f(z +jc)$, $j=1,2,\ldots,\tau$, of multiplicity $t_{j}\geq 0$. Then $z_1$ may or may not be a pole of $L_c(z,f)$. 
	From the above arguments and our assumption, we conclude that
	\bea\label{e3.11} N(r,H)&\leq& \ol N\left(r,\frac{1}{p(z)}\right)+N(r,(L_c(z,f))^n)+\ol N(r,L_c(z,f))+S(r,f)\nonumber\\&\leq& nN(r,L_c(z,f))+(\tau+1)\ol N(r,f)+S(r,f). \eea
	\par Next, we turn our attention towards the poles of $G(z)$. We know from (\ref{e3.3}) and (\ref{e3.10}) that the poles of $G(z)$ are at the zeros of $p(z)$ and $f(z)$ and at the poles of $f(z)$, $f(z +jc)$, $j=1,2,\ldots,\tau$. If $z_0$ is a zero of $p(z)$, zero of $f(z)$, or pole of $f(z + jc)$, $j=1,2,\ldots,\tau$, then  by (\ref{e3.10}) we know $z_0$ will be  at most a simple pole of $ G(z)$. If $z_0$ is a pole of $f(z)$ but not a pole of $f(z + jc)$, $j=1,2,\ldots,\tau$, then by the Laurent expansion of $G(z)$ at $z_0$, we obtain that $G(z)$ is analytic at $z_0$. Therefore, from our assumption and the discussions above, we know \bea\label{e3.12} N(r,G) &\leq& \ol N\left(r, \frac{1}{p(z)}\right) + \ol N(r,L_c(z,f)) + \ol N\left(r, \frac{1}{f}\right) + S(r,f) \nonumber\\ &\leq& \ol N(r,L_c(z,f)) + \ol N\left(r, \frac{1}{f}\right) + S(r,f). \eea \par
	Using {\it Lemma \ref{l8}}, from equations (\ref{e3.8}), (\ref{e3.11}) and (\ref{e3.12}) we have \beas &&mT(r,f)\\ &\leq& nm(r,f) + nN(r,L_c(z,f)) +(\tau+1)\ol N(r,f) + \ol N(r,L_c(z,f)) +\ol N\left(r, \frac{1}{f}\right ) + S(r,f)\\& \leq& nm(r,f)+n(\tau+1)N(r,f)+(\tau+1)\ol N(r,f)+(\tau+1)\ol N(r,f)+\ol N\left(r,\frac{1}{f}\right)+S(r,f)\\&\leq& \{(\tau+1)(n+2)+1\}T(r,f)+S(r,f),\eeas which contradicts the assumption that $m\geq (\tau+1)(n+2)+2$.
	This completes the proof of the theorem.
\end{proof}

\begin{center} {\bf Acknowledgement} \end{center} 
The authors wish to thank the referee for his/her valuable suggestions towards the improvement of the paper. The second author is thankful to University Grant Commission (UGC), Govt. of India for financial suport under UGC-Ref. No.: 1174/(CSIR-UGC NET DEC. 2017) dated 21/01/2019.

\end{document}